\documentclass[11pt,a4paper]{article}
\usepackage[utf8x]{inputenc}
\usepackage{ucs}
\usepackage{amsmath}
\usepackage{amsfonts}
\usepackage{amssymb}
\usepackage{amsthm}
\usepackage{color}
\usepackage{dsfont}
\usepackage{geometry}

\newtheorem{defn}{Definition}[section]
\newtheorem{thm}{Theorem}[section]
\newtheorem{prop}{Proposition}[section]

\newtheorem{rmk}{Remark}[section]
\newtheorem{lem}{Lemma}[section]

\title{Almost sure behavior of the critical points \\of random polynomials}
\author{J\"urgen Angst$^1$, Dominique Malicet$^2$ and Guillaume Poly$^1$}

\newcommand\blfootnote[1]{%
  \begingroup
  \renewcommand\thefootnote{}\footnote{#1}%
  \addtocounter{footnote}{-1}%
  \endgroup
}

\begin{document}

\maketitle

\begin{abstract}
Let $(Z_k)_{k\geq 1}$ be a sequence of independent and identically distributed complex random variables with common distribution $\mu$ and let $P_n(X):=\prod_{k=1}^n (X-Z_k)$ the associated random polynomial in $\mathbb C[X]$. In \cite{MR3283656}, the author established the conjecture stated by Pemantle and Rivin in \cite{MR3363974} that the empirical measure $\nu_n$ associated with the critical points of $P_n$ converges weakly in probability to the base measure $\mu$. In this note, we establish that the convergence in fact holds in the almost sure sense. Our result positively answers a question raised by Z. Kabluchko and formalized as a conjecture in the recent paper \cite{Michelen}.
\end{abstract}


\blfootnote{This work was supported by the ANR grant UNIRANDOM, ANR-17-CE40-0008.} 
\blfootnote{$^1$ Univ Rennes, CNRS, IRMAR - UMR 6625, F-35000 Rennes, France.} 
\blfootnote{$^2$ Universit\'e Gustave Eiffel, CNRS, LAMA - UMR 8050, F-77420 Champs-sur-Marne, France.} 
\blfootnote{email: jurgen.angst@univ-rennes.fr, dominique.malicet@univ-eiffel.fr, guillaume.poly@univ-rennes.fr}

\section{Introduction and main result}

We consider complex random polynomials whose roots are given by independent and identically distributed random variables. Namely, if $(Z_k)_{k\geq 1}$ is a sequence of i.i.d. complex random variables with distribution $\mu$, defined on a common probability space $(\Omega, \mathcal F, \mathbb P)$, we consider the random polynomial $P_n$ in $\mathbb C[X]$ defined by 
\[
P_n(X):=\prod_{k=1}^n (X-Z_k).
\]
This model was proposed by Pemantle and Rivin in \cite{MR3363974} where they raise the following question: for $n$ large, is the distribution of the critical points of $P_n$ always close to the distribution of its roots, in some stochastic sense? They illustrate this question with some examples and partial results.  
Since then, this model has attracted a lot of attention, see for example \cite{MR2970701, MR3283656,MR3698743,MR3896083,MR3940764,MR4136480,MR4474893} and the references therein.

\newpage
A decisive step on this problem was done by Kabluchko in \cite{MR3283656} where he positively answered the question in the most general framework. To formalize the statement, let us introduce $W_1,\ldots, W_{n-1}$ the (random) critical points  of $P_n$, i.e. the zeros of $P_n'$  (with possible multiple occurences) and the (random) empirical measures
\[
\mu_n= \mu_n^{\omega}:=\frac{1}{n} \sum_{i=1}^n \delta_{Z_i(\omega)}, \qquad \nu_n=\nu_n^{\omega}:= \frac{1}{n-1} \sum_{i=1}^{n-1} \delta_{W_i(\omega)}.
\]
The general result obtained by Kabluchko is the following.
\begin{thm}[Theorem 1.1 of \cite{MR3283656}] \label{thm.kab}
As $n$ goes to infinity, the sequence of empirical measures $(\nu_n)$ converges in probability to $\mu$, in the space of complex probability measures, equipped with the topology of the convergence in distribution. 
\end{thm}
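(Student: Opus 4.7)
The plan is a potential-theoretic reduction: I would show that the logarithmic potentials $U_{\nu_n}$ converge to $U_\mu$ in $L^1_{\mathrm{loc}}(\mathbb{C})$ in probability, where $U_\sigma(z) := \int \log|z-w|\, d\sigma(w)$, and then conclude via the distributional identity $\Delta U_\sigma = 2\pi \sigma$.

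The starting point is the logarithmic-derivative identity
\[
\frac{P_n'(z)}{P_n(z)} \;=\; \sum_{k=1}^n \frac{1}{z-Z_k},
\]
which, combined with the factorization $P_n'(z) = n\prod_{i=1}^{n-1}(z-W_i)$, yields the key decomposition
\[
U_{\nu_n}(z) \;=\; \frac{n}{n-1}\, U_{\mu_n}(z) \;+\; \frac{1}{n-1}\log\Bigl| \tfrac{1}{n}\sum_{k=1}^n \tfrac{1}{z-Z_k}\Bigr| \;+\; \frac{\log n}{n-1},
\]
valid at every $z$ distinct from all the $Z_k$. I would then establish pointwise convergence in probability: by Fubini, for Lebesgue almost every $z\in\mathbb{C}$ both expectations $\mathbb{E}[|\log|z-Z_1||]$ and $\mathbb{E}[|z-Z_1|^{-1}]$ are finite. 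For such $z$, the weak law of large numbers yields $U_{\mu_n}(z) \to U_\mu(z)$ and $\tfrac{1}{n}\sum_k \tfrac{1}{z-Z_k}\to G_\mu(z)$ in probability, where $G_\mu$ denotes the Cauchy transform of $\mu$. At any such $z$ with moreover $G_\mu(z)\neq 0$, the correction term $\tfrac{1}{n-1}\log|n\, G_{\mu_n}(z)|$ tends to $0$ in probability, and substitution gives $U_{\nu_n}(z) \to U_\mu(z)$ in probability.

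To upgrade this pointwise statement to $L^1_{\mathrm{loc}}(\mathbb{C})$ convergence in probability, I would invoke Vitali's theorem, which reduces the task to equi-integrability of $\{U_{\nu_n}\}$ on each compact $K$. Tightness of $(\nu_n)$ follows from the Gauss--Lucas theorem, forcing $W_i \in \mathrm{conv}\{Z_1,\ldots,Z_n\}$, together with the tightness of $(\mu_n)$ coming from the law of large numbers applied to tail probabilities. Equi-integrability then follows from a Jensen-type bound: for $p>1$,
\[
\int_K |U_{\nu_n}(z)|^p\, dm(z) \;\leq\; \frac{1}{n-1}\sum_{i=1}^{n-1}\int_K \bigl|\log|z-W_i|\bigr|^p\, dm(z),
\]
and the right-hand side is uniformly bounded as soon as the $W_i$'s are confined to a common compact. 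Together with the pointwise step above, this gives $U_{\nu_n}\to U_\mu$ in $L^1_{\mathrm{loc}}$ in probability, hence $\nu_n \to \mu$ weakly in probability, as desired.

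The hardest step will be the equi-integrability when $\mu$ has unbounded support: in that case $\mathrm{conv}\{Z_1,\ldots,Z_n\}$ grows without bound with $n$, and the naive uniform Jensen estimate breaks down. The natural remedy is a truncation argument, approximating $\mu$ by its restriction to a large disk $\{|z|\le R\}$ and controlling the contribution of outlying roots through a tail bound on $\#\{k\le n:\, |Z_k|>R\}$ provided by the law of large numbers. A secondary subtlety lies in the polar set $\{G_\mu=0\}$, where the pointwise step momentarily fails; since this set has zero Lebesgue measure, it is invisible to the final Laplacian step and causes no harm.
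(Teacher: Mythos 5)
The statement you are proving is quoted in the paper from Kabluchko's \cite{MR3283656}; the paper does not reprove it but instead establishes the stronger almost-sure version by a genuinely different route (Jensen's formula applied to $P_n'/P_n$, a one-sided comparison of $\nu_n$ and $\mu_n$ against $\log^-|u(\cdot)|$ for Möbius maps $u$, and weak-$*$ compactness on the Riemann sphere). Your proposal instead follows Kabluchko's original potential-theoretic strategy: decompose $U_{\nu_n}$ in terms of $U_{\mu_n}$ plus a logarithmic correction and pass to the distributional Laplacian. That is a legitimate route, so your proposal should be judged against Kabluchko's argument rather than the paper's.

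There is, however, a genuine gap at the decisive step. You dismiss the set $\{G_\mu = 0\}$ as Lebesgue-null (even calling it polar), but this is false in general: if $\mu$ is the normalized Lebesgue measure on the annulus $\{1\le|w|\le 2\}$, a direct residue computation shows $G_\mu(z)=\int \frac{d\mu(w)}{z-w}=0$ for every $z$ in the open unit disk, a set of positive area. On such a set, the weak law of large numbers only tells you $\tfrac1n\sum_k\tfrac1{z-Z_k}\to 0$, which gives no control on $\tfrac1{n-1}\log\bigl|\sum_k \tfrac1{z-Z_k}\bigr|$ from below; that quantity could a priori diverge to $-\infty$. This is exactly the difficulty that the Kolmogorov--Rogozin anti-concentration inequality is introduced to handle in \cite{MR3283656}: it gives $\mathbb P\bigl(|\sum_k \tfrac1{z-Z_k}|\le T\bigr)\lesssim T/\sqrt n$ without any reference to $G_\mu$, hence forces the negative part of the correction term to vanish in probability. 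The present paper makes the same point in its remark following Proposition \ref{prop.kolmorogo}, and in fact needs a sharpened multidimensional version for the almost-sure statement. Your proposal contains no anti-concentration input at all, so the pointwise step fails on a set of positive measure and the $L^1_{\mathrm{loc}}$/Laplacian conclusion does not follow. (A secondary, acknowledged but unresolved, issue is the equi-integrability of $U_{\nu_n}$ when $\mathrm{supp}\,\mu$ is unbounded; the Gauss--Lucas plus truncation sketch is plausible but would need to be carried out.)
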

This unconditional statement extends anterior results where the convergence is established under various assumptions on $\mu$, for example in \cite{MR3363974} in the case where $\mu$ has finite energy and \cite{MR2970701} in the case where $\mu$ is supported in the circle. Though very general and satisfying, the above statement can potentially be further improved. Indeed, the convergence of $(\nu_n)_n$ to $\mu$ is here stated in probability (as in most of the results of the papers cited above),  however one could expect that it holds for a fixed typical realization of the random variables. The objective of the paper is precisely to establish this almost sure convergence, without any additional assumption. 

\begin{thm}\label{thm.main}
Almost surely with respect to $\mathbb P$, the sequence of empirical measures $(\nu_n)$ converges in distribution to $\mu$ as $n$ goes to infinity. 
\end{thm}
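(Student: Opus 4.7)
The approach is to work at the level of logarithmic potentials and upgrade the in-probability convergence of Theorem~\ref{thm.kab} to an almost sure one. By separability of $C_c^2(\mathbb{C})$ for a suitable norm, it suffices to prove that for each fixed $\varphi \in C_c^2(\mathbb{C})$, $\int \varphi\,d\nu_n \to \int \varphi\,d\mu$ almost surely. Integration by parts applied to $\Delta \log|P_n(z)| = 2\pi\sum_k \delta_{Z_k}$ and to the analogous identity for $P_n'/n$ (which is monic of degree $n-1$ with zeros at the $W_i$), combined with $P_n'/P_n = S_n(z) := \sum_{k=1}^n (z-Z_k)^{-1}$, yields the key identity
\[
\int \varphi\,d\nu_n - \int \varphi\,d\mu_n = \frac{1}{2\pi n}\int \Delta\varphi(z)\,\log|S_n(z)|\,dA(z) + O(1/n).
\]
Since $\int \varphi\,d\mu_n \to \int \varphi\,d\mu$ almost surely by the strong law of large numbers, the whole problem reduces to proving that $\frac{1}{n}\log|S_n| \to 0$ in $L^1_{\mathrm{loc}}(\mathbb{C})$, almost surely.

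The upper bound $\limsup_n \int_K \bigl(\tfrac{1}{n}\log|S_n|\bigr)^+\,dA \le 0$ is the easier half: from $|S_n(z)| \leq n\max_k|z-Z_k|^{-1}$ we deduce
\[
\Bigl(\tfrac{1}{n}\log|S_n(z)|\Bigr)^+ \leq \tfrac{\log n}{n} + \tfrac{1}{n}\max_{1\le k\le n} \log^+|z-Z_k|^{-1},
\]
whose $L^1(K)$ norm is $O(1)$, uniformly in $n$, because $\sup_w \int_K \log^+|z-w|^{-1}\,dA(z) < \infty$ for any compact $K$. A straightforward refinement combined with Borel--Cantelli converts this uniform bound into the almost sure convergence to zero.

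The main obstacle is the lower bound, namely
\[
\int_K \bigl(\log|S_n(z)|\bigr)^-\,dA(z) = o(n) \quad \text{almost surely, for every compact } K \subset \mathbb{C}.
\]
Equivalently, the Lebesgue measure of the sublevel set $\{z\in K : |S_n(z)| < e^{-\varepsilon n}\}$ must tend to $0$ along the \emph{whole} sequence, not merely in probability. Kabluchko's proof of Theorem~\ref{thm.kab} establishes this in probability via a second-moment/Paley--Zygmund-type argument on $S_n$ viewed as a random meromorphic function. The plan is to revisit that argument and produce a quantitative version with summable-in-$n$ deviation probabilities, via: (i) splitting $S_n(z) = S_n^{(\delta_n)}(z) + R_n^{(\delta_n)}(z)$ according to whether $|z-Z_k| \geq \delta_n$ or $< \delta_n$, with a well-chosen threshold $\delta_n \downarrow 0$; (ii) proving that the truncated sum $S_n^{(\delta_n)}(z)$ concentrates around $n\int (z-w)^{-1}\,d\mu(w)$ with tail bounds summable in $n$, via Chebyshev or Bernstein on its i.i.d.\ summands (now bounded in terms of $\delta_n^{-1}$); (iii) absorbing the remainder using the deterministic bound that, outside an exceptional set of $z$ of small Lebesgue measure, the number of $Z_k$'s falling in $B(z,\delta_n)$ is controlled by $n\mu(B(z,\delta_n))$ plus summable fluctuations.

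Once Steps (i)--(iii) are carried out, Borel--Cantelli over a countable dense family of parameters, combined with a Fubini step transferring pointwise deviation estimates to $L^1_{\mathrm{loc}}(K)$ norms, yields the full almost sure vanishing of $\frac{1}{n}\log|S_n|$ in $L^1_{\mathrm{loc}}$. Testing the resulting convergence against a countable dense family of $\varphi \in C_c^2(\mathbb{C})$ produces a single full-probability event on which $\nu_n \to \mu$ in distribution, completing the proof. The hard part is unambiguously Step (ii): extracting sharp enough concentration from Kabluchko's second-moment method to make the tail probabilities summable in $n$ for every $\varepsilon>0$ simultaneously.
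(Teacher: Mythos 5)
Your overall framework—passing to logarithmic potentials via $\Delta\log|P_n|$ and $\Delta\log|P_n'|$, reducing the theorem to the $L^1_{\mathrm{loc}}$-convergence of $\frac{1}{n}\log|S_n|$ to zero, and handling the upper bound via the crude estimate $|S_n(z)|\le n\max_k|z-Z_k|^{-1}$—is sensible and is in the spirit of Kabluchko's original argument. Your treatment of the $\log^+$ side is essentially correct. The gap is in Step~(ii), and it is a real one, not a technicality.

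Concretely: you propose to show $S_n^{(\delta_n)}(z)$ concentrates around $n\,m(z)$ with $m(z)=\int(z-w)^{-1}d\mu(w)$, and to infer from this a lower bound on $|S_n(z)|$. Concentration gives an estimate of the form $|S_n(z)-n\,m(z)|\le t$ with high probability; this implies $|S_n(z)|\ge n|m(z)|-t$, which is useless precisely where $m(z)$ is small or zero. The Cauchy transform of an arbitrary $\mu$ can and does vanish on a nonempty set in the support, and near that set one needs \emph{anti}-concentration (a lower bound on $|S_n(z)|$ that holds regardless of the location of $\mathbb E[S_n(z)]$), not concentration around the mean. Falling back on the one-dimensional Kolmogorov--Rogozin inequality (as Kabluchko does in proving Theorem~\ref{thm.kab}) only yields $\mathbb P(|S_n(z)|\le r)=O(n^{-1/2})$, which is \emph{not} summable in $n$, so Borel--Cantelli is unavailable and you only recover convergence in probability. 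No amount of Chebyshev/Bernstein tuning of $\delta_n$ fixes this, because the obstruction is the zero set of $m$, not the tails of the summands. This is exactly the obstacle named in the paper's remark following Proposition~\ref{prop.kolmorogo}.

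The paper's key new idea, absent from your plan, is a \emph{multidimensional} Kolmogorov--Rogozin anti-concentration inequality (Proposition~\ref{prop.kolmorogo}): for sums of i.i.d.\ non-degenerate random vectors in $\mathbb C^d$, $\sup_x\mathbb P(\|S_n-x\|\le r)\le C\,n^{-d/2}$, with no integrability assumptions. Applying this with $d=3$ to the vector $\bigl(S_n(z_1),S_n(z_2),S_n(z_3)\bigr)$ at a generic triple $(z_1,z_2,z_3)$—which is non-degenerate precisely because $\mu$ does not have finite support—gives a decay $O(n^{-3/2})$, which \emph{is} summable. Borel--Cantelli then yields that almost surely, for a.e.\ generic triple, $|S_n(z_i)|\ge 1$ for at least one $i$ eventually. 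This one good point per triple, fed into a Jensen-formula-on-Möbius-transformed-disks argument, is enough to produce a one-sided inequality $\int\log^-|u|\,d\widehat{\nu}_\infty\le\int\log^-|u|\,d\mu$ for generic Möbius $u$, which in turn pins down every cluster value as $\mu$. (You would also need to treat the finite-support case separately, as the paper does, since the non-degeneracy needed for the anti-concentration lemma fails there; your plan does not address this.) So your outline needs, at a minimum, the replacement of the concentration step by this higher-dimensional anti-concentration argument.
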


Note that the question of the almost sure convergence of the empirical measure of critical points was in fact raised by Z. Kabluchko himself and it is in particular formalized as Conjecture 4.3 at the very end of the recent paper \cite{Michelen}.
\begin{rmk}
Note that following the reference \cite{MR2236634}, the statement of Theorem \ref{thm.main} is equivalent to the fact that for all bounded continuous test functions $\varphi$,
\[
\int_{\mathbb C} \varphi(x)d\nu_n(x) \xrightarrow[n \to +\infty]{a.s.} \int_{\mathbb C} \varphi(x)d\mu(x).
\]
Similarly, the statement of Theorem \ref {thm.kab} is equivalent to the fact that for all bounded continuous test functions $\varphi$, we have the following convergence in probability
\[
\int_{\mathbb C} \varphi(x)d\nu_n(x) \xrightarrow[n \to +\infty]{\mathbb P} \int_{\mathbb C} \varphi(x)d\mu(x).
\]
\end{rmk}

Our general strategy to establish Theorem \ref{thm.main} relies on the celebrated \textit{Jensen's formula}, which roughly speaking relates the zeros, the poles and the growth of a meromorphic function. We want to apply the formula to the logarithmic derivative $S_n:=P_n'/P_n$, whose poles are the roots of $P_n$ and zeros are the critical points of $P_n$. This will enable us to compare the distributions $\mu_n$ and $\nu_n$, provided that we have some almost sure control of the growth of $|S_n|$, by above and by below. The upper bound is rather easy to obtain, but the lower bound is more subtle and relies on a general anti-concentration result for sums of random vectors, namely a multidimensional version of \textit{Kolmogorov--Rogozin inequality}. This approach allows to make explicit a crucial one-sided estimate between $\mu_n$ and $\nu_n$ which, as we shall see below, is sufficient to conclude. \\

The paper is organized as follows. In Section \ref{sec.discrete}, for technical reasons, we first establish Theorem \ref{thm.main} in the elementary case where the base measure $\mu$ has finite support. In Section \ref{sec.concentration}, we state and prove the anti-concentration estimate mentioned above. In Section \ref{sec.estimates}, we then deduce some controls of the growth of $\frac{P_n'}{P_n}$ that hold almost surely. Finally in Section \ref{sec.convergence}, we combine Jensen's Formula with the estimates previously obtained to prove Theorem \ref{thm.main} in the general case.\\

We keep for the whole paper the notations that we introduced above for the random variables $Z_n$, the polynomials $P_n$ and the associated distributions $\mu$, $\mu_n$, $\nu_n$. Let us also introduce some additional notations:

\begin{itemize}
	\item We denote by $\lambda_{\mathbb C}$ (resp. $\lambda_{\mathbb R}$) the Lebesgue measure on $\mathbb C$ (resp $\mathbb R$). We will say that a set is of full measure if its complement is negligible i.e of measure zero.
	\item We denote by $C(a,r)$ (resp. $D(a,r)$, $\overline{D}(a,r)$) the circle (resp. open disk, closed disk) of center $a$ and radius $r$. In the case where $a=0$ and $r=1$, we simply denote $C$ (resp. $D$, $\overline{D}$).
		\item We will denote by $||x||:=(\sum_{i=1}^d |x_i|^2)^{1/2}$ the standard Euclidean norm of a given vector $x=(x_1, \ldots, x_n)$ in $\mathbb R^d$ or $\mathbb C^d$. In the real case, the Euclidean scalar product will be denoted by $x \cdot y :=\sum_{i=1}^d x_i y_i$.
	\item If $K \subset \mathbb C$ is compact and $f:K\rightarrow \mathbb C$ is continuous, we set $||f||_K:=\sup_{z\in K} |f(z)|$.
	\item We denote by $\log^+$ and $\log^-$ the positive and negative parts of the standard logarithm function, i.e. for $x>0$,
	\[
	\log^+(x):=\log(x)\mathds{1}_{x>1}, \qquad \log^-(x):=- \log(x)\mathds{1}_{x<1}.
	\]
\end{itemize}

\section{Proof of the main result}\label{sec.proofs}
The rest of the paper is devoted to the proof of our main result i.e. Theorem \ref{thm.main} stated in the introduction. For technical reasons, we first give the proof in the elementary case where the base measure $\mu$ has finite support. The proof in the general case is the object of the next Sections \ref{sec.concentration}, \ref{sec.estimates} and \ref{sec.convergence}.

\subsection{The case of a measure with finite support}\label{sec.discrete}
In this section, we prove Theorem \ref{thm.main} in the special and simpler case where the measure $\mu$ has a finite support $\text{supp}(\mu):=\{z_1, \dots, z_r\}$ with $p_i:=\mu(\{z_i\})>0$ and $\sum_{i=1}^r p_i=1$. In that case, the random polynomial $P_n$ has the form 
\[
P_n(X)= \prod_{i=1}^r (X-z_i)^{N_i},
\]
where $N=(N_1, \ldots, N_r)$ has multinomial distribution with parameters $n$ and $(p_i)_{1 \leq i \leq r}$, namely for $k=(k_1, \ldots, k_r)$ such that $\sum_{i=1}^r k_i = n$ we have
\[
\mathbb P( N=k) =\frac{n!}{\prod_{i=1}^r k_i!} p_i^{k_i}.
\]
The empirical measure then reads 
$
\mu_n:= \sum_{i=1}^r \frac{N_i}{n} \delta_{z_i},
$
and by the law strong of large numbers, $\mathbb P$-almost surely,  we have 
\[
\left( \frac{N_1}{n}, \ldots, \frac{N_r}{n} \right) \xrightarrow[n \to +\infty]{a.s.} (p_1, \ldots, p_r).
\]
Therefore, $\mathbb P$-almost surely, $\mu_n$ converges weakly to $\mu$. Now the derivative $P_n'$ is given by 
\[
P_n'(X) =\sum_{i=1}^r N_i (X-z_i)^{N_i-1} \prod_{\substack{1\leq j \leq r \\ j\neq i}} (X-z_j)^{N_j}= \left( \prod_{i=1}^r (X-z_i)^{N_i-1}\right) Q_r(X),
\]
where $Q_r$ is the following polynomial of degree $r-1$
\[
Q_r(X):=\sum_{i=1}^r N_i \prod_{\substack{1\leq j \leq r \\ j\neq i}} (X-z_j).
\]
If $z_1', \ldots, z_{r-1}'$ denote the (random) complex zeros of $Q_r$ (with possible multiple occurrences), the empirical measure $\nu_n$ of the critical points of $P_n$ thus reads
\[
\nu_n=\sum_{i=1}^r \frac{N_i-1}{n-1} \delta_{z_i} + \frac{1}{n-1} \sum_{i=1}^{r-1} \delta_{z_i'}.
\]
As a result, again by the strong law of large numbers, $\mathbb P$-almost surely,  $(\nu_n)$ converges weakly to $\mu$.

\subsection{A general anti-concentration estimate}\label{sec.concentration}
\noindent
As mentioned in the introduction, our global strategy in order to establish Theorem \ref{thm.main} in the general case requires establishing some almost sure controls for the logarithmic derivative 
\[
S_n(z):=\frac{P_n'(z)}{P_n(z)}=\sum_{k=1}^n \frac{1}{z-Z_k}, \;\; z \in \mathbb C.
\] 
Lower bounding $|S_n(z)|$ then amounts to establish an anti-concentration estimate for the sum just above. Note that since we do not impose any condition of the common distribution $\mu$ of the variables $Z_k$, we cannot impose any condition on the summands $1/(z-Z_k)$. Therefore, the goal of this section is to establish an anti-concentration estimate for sums of i.i.d. random vectors without any assumption on the common underlying distribution. The result we obtain, namely Proposition \ref{prop.kolmorogo} below, is a multidimensional version of Kolmogorov--Rogozin inequality, see e.g. Theorem 2.22 on p. 76 in \cite{MR1353441}.  This kind of estimate is not new, some variants appear for example in the seminal papers \cite{MR14626} or \cite{MR295403} under finite moments conditions. Nevertheless, to the best of our knowledge, the unconditional statement that we need does not appear in the literature, therefore we give here a detailed proof. Let us first recall the following standard definition.

\begin{defn}\label{def.degenere}
A random vector $X=(X_1,\ldots,X_d)$ with values in $\mathbb R^d$ (resp. $\mathbb C^d$) is non-degenerate if there does not exists a non trivial linear combination of $X_1,\ldots,X_d$ which is almost surely constant. Equivalently, it means that the random variables $X_1,\ldots,X_d,1$ are linearly independent on $\mathbb R$ (resp. $\mathbb C$).
\end{defn}

\begin{rmk}In the real case, if the random vector $X=(X_1,\ldots,X_d)$ is square integrable with covariance matrix $K$, then it is non-degenerate iff  $K$ is positive definite. Indeed, for any $\lambda=(\lambda_1, \ldots, \lambda_d) \in \mathbb R^d$ with transpose $\lambda^*$, we have $\text{var}(\lambda \cdot X)= \lambda K \lambda^*$.
\end{rmk}

Under the above non-degeneracy assumption, we can now state the following general anti-concentration estimate, whose main interest is to provide a better upper bound as the dimension increases. 

\begin{prop}\label{prop.kolmorogo}
	Let us consider $(X^n)_{n \geq 1}=(X_1^n, \ldots, X_d^n)_{n \geq 1}$ a sequence of i.i.d. non-degenerate random vectors with values in $\mathbb C^d$ and set $S_n:=\sum_{k=1}^n X^k$. Then, there exists a positive constant $C$ which depends on ${d,r}$ and the law of $X$, but is independent of $n$ such that 
	\[
	\sup_{x \in \mathbb C^d} \mathbb P\left( ||S_n-x|| \leq r \right) \leq \frac{C}{n^{d/2}}.
	\]
\end{prop}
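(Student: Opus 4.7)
My strategy relies on Esseen's smoothing inequality together with a quantitative decay estimate for the characteristic function $\widehat{\mu}$ of $X$, after a preliminary reduction to a $d$-dimensional real non-degenerate problem. The whole point is that, despite the absence of any moment assumption, the $\mathbb C$-non-degeneracy in $\mathbb C^d$ still forces enough spreading of the distribution to produce the exponent $n^{-d/2}$.

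\medskip
\noindent\textbf{Reduction to $\mathbb R^d$.} Identifying $\mathbb C^d$ with $\mathbb R^{2d}$, I would first argue that $\mathbb C$-non-degeneracy of $X$ forces the real affine span of the support of $X$ to have dimension at least $d$. Indeed, if it had dimension $k<d$, then its $\mathbb C$-linear span would have complex dimension at most $k<d$, so that any non-zero vector in its $\mathbb C$-orthogonal complement would yield a complex affine relation satisfied by $X$, contradicting the non-degeneracy. Consequently one can find $\mathbb R$-linear functionals $\ell_1,\dots,\ell_d:\mathbb C^d\to\mathbb R$ such that $Y:=(\ell_j(X))_{1\le j\le d}$ is $\mathbb R$-non-degenerate in the sense of Definition~\ref{def.degenere}. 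Since $\|(\ell_j(S_n-x))_j\|\le L\,\|S_n-x\|$ for some constant $L$ depending only on the $\ell_j$, it suffices to prove the proposition for the $\mathbb R^d$-valued non-degenerate sum $T_n:=\sum_{k=1}^n(\ell_j(X^k))_j$.

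\medskip
\noindent\textbf{Esseen and the key characteristic function estimate.} For this real problem I would invoke Esseen's smoothing inequality
\[
\sup_{y\in\mathbb R^d}\mathbb P\bigl(\|T_n-y\|\le r\bigr)\;\le\;C_d\,r^d\int_{\|t\|\le 1/r}|\widehat{\mu_Y}(t)|^n\,dt,
\]
which reduces matters to the $\ell^2$-type anti-concentration
\[
1-|\widehat{\mu_Y}(t)|^2\;\ge\;c\,\min(\|t\|^2,1),\qquad t\in B(0,1/r),
\]
and I expect this to be the main technical obstacle of the proof, since it replaces the Taylor expansion that one uses in the classical finite-variance argument. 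Writing $Z:=Y-Y'$ for an independent copy, one has $1-|\widehat{\mu_Y}(t)|^2=\mathbb E[1-\cos(t\cdot Z)]$, and the pointwise inequality $1-\cos s\ge \pi^{-2}s^2\mathbf 1_{|s|\le\pi}$ yields $1-|\widehat{\mu_Y}(t)|^2\ge \pi^{-2}t^{\ast}\Sigma_R t$ for $\|t\|\le \pi/R$, where $\Sigma_R:=\mathbb E[ZZ^{\ast}\mathbf 1_{\|Z\|\le R}]$ is the truncated covariance of $Z$. Real non-degeneracy of $Z$, together with monotone convergence and a compactness argument on the unit sphere $S^{d-1}$, produces a single $R$ large enough to have $\Sigma_R\succeq cI$, which handles the small-$\|t\|$ range; the remaining compact annulus $\pi/R\le\|t\|\le 1/r$ is treated by continuity, the function $1-|\widehat{\mu_Y}(t)|^2$ being strictly positive there by non-degeneracy.

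\medskip
\noindent\textbf{Conclusion.} From the previous bound one deduces $|\widehat{\mu_Y}(t)|^n\le \exp(-nc\min(\|t\|^2,1)/2)$ on $B(0,1/r)$. Splitting the Esseen integral between $\|t\|\le 1$, which by Gaussian rescaling contributes $O(n^{-d/2})$, and $1\le\|t\|\le 1/r$, which is exponentially small in $n$, one obtains $\int_{\|t\|\le 1/r}|\widehat{\mu_Y}(t)|^n\,dt\le C\,n^{-d/2}$. Combined with Esseen's inequality and the real reduction, this delivers the announced bound $\sup_x\mathbb P(\|S_n-x\|\le r)\le C(d,r,\mu)\,n^{-d/2}$ and completes the proof of Proposition~\ref{prop.kolmorogo}.
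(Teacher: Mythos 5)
Your reduction to $\mathbb R^d$ via the real affine span of the support, and your symmetrization-and-truncation argument --- writing $1-|\widehat{\mu_Y}(t)|^2=\mathbb E[1-\cos(t\cdot Z)]\ge \pi^{-2}\,t^{\ast}\Sigma_R t$ for $\|t\|\le\pi/R$ with $\Sigma_R=\mathbb E[ZZ^{\ast}\mathds{1}_{\|Z\|\le R}]\succeq cI$ for $R$ large, via a covering argument on the unit sphere --- are both correct and give a valid alternative route to the quadratic characteristic function bound near $t=0$. The paper instead extracts one pair $(a,b)$ with $a\Re(X)+b\Im(X)$ non-degenerate in $\mathbb R^d$ by a determinant lemma, and conditions on $\{\|X\|\le R\}$ rather than symmetrizing; the two devices buy the same thing, a decay $|\widehat{\mu_Y}(t)|\le\exp(-c\|t\|^2)$ valid on a small ball $\|t\|\le\delta$, with no moment hypothesis.

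The genuine gap is in the annulus $\pi/R\le\|t\|\le 1/r$. You assert that $1-|\widehat{\mu_Y}(t)|^2>0$ there \emph{by non-degeneracy}, but non-degeneracy only forbids $t\cdot Y$ from being almost surely constant; it does not forbid $t\cdot Y$ from being almost surely valued in a coset of $2\pi\mathbb Z$. If, say, $Y$ is supported on $d+1$ affinely independent points of $2\pi\mathbb Z^d$, then $Y$ is non-degenerate yet $\widehat{\mu_Y}(e_1)=1$; once $\pi/R\le 1\le 1/r$ (and $R$ must be taken large, so this occurs whenever $r\le 1$, with rescaled lattice examples covering every $r$) the vector $e_1$ lies in your annulus, $|\widehat{\mu_Y}|^n$ does not decay there, and the bound $\int_{\|t\|\le 1/r}|\widehat{\mu_Y}(t)|^n\,dt\le Cn^{-d/2}$ is simply false. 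The fix is to use the free parameter in Esseen's inequality, as the paper does: it holds for every $\varepsilon>0$ in the form $\sup_y\mathbb P(\|T_n-y\|\le r)\le\kappa_{d,\varepsilon}\,r^d\int_{\|t\|\le\varepsilon/r}|\widehat{\mu_Y}(t)|^n\,dt$, so choosing $\varepsilon$ small enough that $\varepsilon/r\le\pi/R$ keeps the whole domain of integration inside the ball where your quadratic estimate applies, and the annulus never appears. With that single modification your argument goes through.
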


\begin{rmk}
The important point here is that no integrability assumption is required. For $d=1$, it gives back the i.i.d. case of Kolmogorov--Rogozin inequality, which was used in \cite{MR3283656} to prove Theorem \ref{thm.kab}. However, the one-dimensional version of the inequality appears to be not sufficient to prove the almost sure version of the theorem, mainly because one cannot apply some Borel--Cantelli type argument due to the divergence of the series $\sum_n n^{-1/2}$. As we shall see in Section \ref{sec.estimates} below, enlarging the dimension allows to bypass this difficulty.
\end{rmk}
	
\begin{proof}[Proof of Proposition \ref{prop.kolmorogo}]
Let us give a detailed proof of the above anti-concentration estimate. 	We start with a simple lemma, which is a multidimensional version of Lemma 1.5 p. 14 of \cite{MR1353441}.

\begin{lem}\label{lem.petrov}
Let $X$ be a non-degenerate random vector in $\mathbb R^d$. Then, there exists positive constants $\delta>0$ and $\kappa>0$ such that, for all $t \in \mathbb R^d$ such that $||t||\leq \delta$, we have 
\[
\left|   \mathbb E\left[ e^{i t \cdot X} \right]\right| \leq 1- \kappa ||t||^2 \leq e^{-\frac{\kappa}{2}  ||t||^2 }.
\]
\end{lem}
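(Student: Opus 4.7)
The plan is to use the symmetrization trick to reduce matters to the characteristic function of the symmetrized variable $Y = X - X'$, where $X'$ is an independent copy of $X$. Writing $\varphi(t) = \mathbb E[e^{it \cdot X}]$, one has $|\varphi(t)|^2 = \mathbb E[e^{it \cdot Y}] = \mathbb E[\cos(t \cdot Y)]$, and the non-degeneracy of $X$ transfers to $Y$: if $\lambda \cdot Y$ were almost surely constant, then by independence the random variables $\lambda \cdot X$ and $\lambda \cdot X'$ would differ by a constant, forcing $\lambda \cdot X$ to be almost surely constant, hence $\lambda = 0$. Moreover $Y$ is symmetric, so $\lambda \cdot Y$ is genuinely non-zero with positive probability for every $\lambda \ne 0$.

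Next I would use the elementary inequality $\cos(u) \le 1 - \tfrac{2}{\pi^2} u^2$ valid for $|u| \le \pi$ to obtain
\[
|\varphi(t)|^2 \;\le\; 1 - \frac{2}{\pi^2}\, \mathbb E\!\left[(t\cdot Y)^2\, \mathbf 1_{|t\cdot Y|\le \pi}\right].
\]
Introducing a truncation radius $R > 0$, the inequality $|t\cdot Y| \le \|t\|\,\|Y\|$ shows that if $\|t\| \le \pi/R$ and $\|Y\| \le R$, then $|t\cdot Y| \le \pi$. Hence the right-hand side is bounded above by $1 - \tfrac{2}{\pi^2}\, t^\top K_R\, t$, where $K_R := \mathbb E[Y Y^\top\, \mathbf 1_{\|Y\|\le R}]$ is the truncated covariance matrix of $Y$.

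The main point, and the only real obstacle, is to choose $R$ so large that $K_R$ is positive definite, uniformly on the unit sphere. Here one cannot simply invoke the covariance of $Y$, since $Y$ need not be square integrable. I would argue by compactness: for each unit vector $\lambda \in S^{d-1}$, the quantity $g_R(\lambda) := \lambda^\top K_R \lambda = \mathbb E[(\lambda\cdot Y)^2 \mathbf 1_{\|Y\|\le R}]$ is nondecreasing in $R$, and by monotone convergence tends to $\mathbb E[(\lambda\cdot Y)^2] \in (0, +\infty]$, which is strictly positive by non-degeneracy of $Y$. For each $\lambda_0 \in S^{d-1}$ one can therefore pick $R_{\lambda_0}$ so that $g_{R_{\lambda_0}}(\lambda_0) > 0$, and the continuity of $g_{R_{\lambda_0}}$ yields an open neighborhood on which $g_{R_{\lambda_0}}$ stays bounded below by a positive constant. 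A finite subcover of $S^{d-1}$ and the choice $R := \max_i R_{\lambda_i}$ produce a uniform bound $t^\top K_R t \ge c_0 \|t\|^2$ with $c_0 > 0$.

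Combining these steps, for $\|t\| \le \delta := \pi/R$ one obtains $|\varphi(t)|^2 \le 1 - \tfrac{2 c_0}{\pi^2} \|t\|^2$, and then $|\varphi(t)| \le 1 - \tfrac{c_0}{\pi^2} \|t\|^2$ by the scalar bound $\sqrt{1-x} \le 1 - x/2$ for $x \in [0,1]$. Setting $\kappa := c_0/\pi^2$ gives the first inequality of the lemma; the second, $1 - \kappa\|t\|^2 \le e^{-\kappa\|t\|^2/2}$, is an elementary one-variable calculus check. Note that no moment assumption on $X$ is used, which is essential for the intended application to the summands $1/(z - Z_k)$ that have no finite moment in general.
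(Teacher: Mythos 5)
Your proof is correct, but it takes a genuinely different route from the paper's. The paper first treats the square-integrable case by Taylor-expanding the characteristic function (using the positive definiteness of the covariance matrix), and then reduces the general case to the bounded one by \emph{conditioning} on the event $\{\|X\|\le R\}$: the triangle inequality gives $|\mathbb E[e^{it\cdot X}]|\le c_R|\mathbb E[e^{it\cdot Y_R}]|+(1-c_R)$, where $Y_R$ is the conditioned vector, which is bounded hence square integrable. You instead symmetrize ($Y=X-X'$), replace the real part of the characteristic function by the elementary bound $\cos u\le 1-\tfrac{2}{\pi^2}u^2$ on $[-\pi,\pi]$, and \emph{truncate} the covariance $K_R=\mathbb E[YY^\top\mathbf 1_{\|Y\|\le R}]$; you then need positivity of $K_R$ uniformly on $S^{d-1}$, which you get by a monotone-convergence-plus-compactness argument. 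The two strategies handle the lack of integrability in parallel ways (conditioning vs.\ truncation), but your version avoids Taylor's theorem entirely in favor of the explicit cosine inequality. A minor additional benefit of your route is that the compactness argument on the sphere makes fully explicit a step the paper only asserts, namely that enlarging $R$ eventually restores non-degeneracy; in exchange, the paper's conditioning argument is shorter once the square-integrable case is in hand. Your transfer of non-degeneracy to the symmetrized vector (if $\lambda\cdot Y$ were a.s.\ constant then by independence $\lambda\cdot X$ would be a.s.\ constant) is correct, and your closing observation that no moment hypothesis is used is exactly the point of the lemma.
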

\begin{proof}[Proof of Lemma \ref{lem.petrov}]
Let us first consider the case where $X=(X_1, \ldots, X_d)$ is non-degenerate and square integrable, with covariance matrix $K_{ij}:=\text{cov}( X_i, Y_j)$. Then, its characteristic function is twice differentiable and by Taylor formula, there exists $\delta>0$ such that for $t=(t_1, \ldots, t_d) \in \mathbb R^d$ with $||t||\leq \delta$, if $t^*$ denotes its transpose 
\[
\left|   \mathbb E\left[ e^{i t \cdot X} \right]\right| \leq 1- \frac{1}{4} t K t^* \leq 1- \kappa ||t||^2 \leq e^{-\frac{\kappa}{2}  ||t||^2},
\]
where $4\kappa$ can be chosen e.g. as the smallest (positive) eigenvalue of the (positive definite) covariance matrix $K$.
Now, if $X=(X_1, \ldots, X_d)$ is a non-degenerate random vector which is not square integrable, let us choose $R$ large enough so that
\[
c_R=\mathbb P( ||X||\leq R) >0.
\]
One can then consider the vector $Y_R$ with conditional distribution $\mathcal L(X \; | \ ||X|| \leq R)$, i.e.  for all bounded measurable function $h$
\[
\mathbb E[ h(Y_R)] =\frac{1}{c_R} \times \mathbb E\left[ h(X) \mathds{1}_{|| X|| \leq R} \right].
\]
Up to enlarging $R$, we can further assume that the vector $Y_R$ is non-degenerate and it is bounded thus square integrable. By the first part of the proof, there exists a positive constants $\delta_R$ and $\kappa_R$ such that, for all $||t||\leq \delta_R$
\[
\left|   \mathbb E\left[ e^{i t \cdot Y_R} \right]\right| \leq 1- \kappa_R ||t||^2.
\]
By the triangular inequality, we have then
\[
\begin{array}{ll}
\displaystyle{\left|   \mathbb E\left[ e^{i t \cdot X} \right]\right|} & \displaystyle{ \leq \left|  \mathbb E\left[ e^{i t \cdot X} \mathds{1}_{||X|| \leq R} \right]\right| +\left|  \mathbb E\left[ e^{i t \cdot X} \mathds{1}_{||X|| > R} \right]\right| } \\
\\
& \leq  \displaystyle{c_R \left|   \mathbb E\left[ e^{i t \cdot Y_R} \right]\right| +\mathbb P( ||X||>R) }\\
\\
& \leq  \displaystyle{c_R \left( 1- \kappa_R ||t||^2 \right) +1-c_R }\\
\\
& = \displaystyle{1- c_R  \kappa_R ||t||^2 \leq \exp\left( -\frac{c_R  \kappa_R}{2} ||t||^2\right)}.
\end{array}
\]
\end{proof}
Combining the above estimate with Esseen inequality on the concentration function, we can now complete the proof of Proposition \ref{prop.kolmorogo}, first in the case where the vector $X$ takes values in $\mathbb R^d$.
 Let us recall the classical Esseen concentration inequality, see e.g. Lemma 7.17 of \cite{MR2289012}, if $X$ is a random vector in $\mathbb R^d$, then for any ${r>0}$ and $ {\varepsilon > 0}$, 
\[
\displaystyle \sup_{x \in {\mathbb R}^d} \mathbb P ( ||X - x|| \leq r ) \leq \kappa_{d,\varepsilon} r^d \int_{\substack{t \in {\mathbb R}^d\\ ||t|| \leq \varepsilon/r}}  \left|\mathbb E\left[  e^{it \cdot X}\right]\right| dt,
\]
for some constant ${\kappa_{d,\varepsilon}}$ depending only on ${d}$ and ${\varepsilon}$.
In particular, uniformly in $x\in \mathbb R^d$
\begin{equation}\label{eq.esseen}
\displaystyle \mathbb P ( || S_n-x|| \leq r ) \leq \kappa_{d,\varepsilon} r^d \int_{\substack{t \in {\mathbb R}^d\\ ||t|| \leq \varepsilon/r}}  \left|\mathbb E\left[  e^{it \cdot S_n }\right]\right| dt.
\end{equation}
By Lemma \ref{lem.petrov} and by independence of the vectors $X^k$, there exists positive constants $\delta>0$ and $\kappa>0$ such that for $||t|| \leq \delta$
\[
\left|\mathbb E\left[  e^{it \cdot S_n}\right]\right|\leq e^{-\frac{n \kappa}{2} ||t||^2}.
\]
Therefore, choosing $\varepsilon$ small enough so that $\varepsilon/r = \delta$, and injecting this estimate in Equation \eqref{eq.esseen}, we get 
\[
\displaystyle \mathbb P ( || S_n-x|| \leq r ) \leq \kappa_{d,\varepsilon} r^d \int_{\substack{t \in {\mathbb R}^d\\ ||t|| \leq \delta}}  e^{-\frac{n \kappa}{2} ||t||^2}dt.
\]
Performing the change of variables $t \to t/\sqrt{n}$ in $\mathbb R^d$, we finally get 
\[
\displaystyle \mathbb P ( || S_n-x|| \leq r ) \leq \frac{\kappa_{d,\varepsilon} r^d}{n^{d/2}} \int_{\substack{t \in {\mathbb R}^d\\ ||t|| \leq \sqrt{n} \delta}}  e^{-\frac{\kappa}{2} ||t||^2} dt \leq \frac{\kappa_{d,\varepsilon} r^d}{n^{d/2}} \int_{t \in {\mathbb R}^d}  e^{-\frac{\kappa}{2} ||t||^2} dt, 
\]
hence the result with the constant 
\[
C_{d,r}:=\kappa_{d,\frac{\delta}{r}} \times r^d \times \int_{t \in {\mathbb R}^d}  e^{-\frac{\kappa}{2} ||t||^2} dt<+\infty.
\]

This establishes Proposition \ref{prop.kolmorogo} in the case where if $X$ takes values in $\mathbb R^d$. Let us now complete the proof in the complex case. We first note the following general fact.

\begin{lem}\label{lem.complexification}
Let $X=(X_1,\ldots,X_d)$ be a random vector with values in $\mathbb C^d$. Then, the two following assertions are equivalent.
\begin{enumerate}
	\item The marginals $X_1,\ldots,X_d$ are linearly dependent on $\mathbb C$.
	\item For every $a$, $b$ in $\mathbb R$, the real variables $a\Re(X_1)+b\Im(X_1),\ldots,a\Re(X_d)+b\Im(X_d)$ are linearly dependent on $\mathbb R$.
\end{enumerate}
\end{lem}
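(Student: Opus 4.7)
The plan is to establish both implications of Lemma~\ref{lem.complexification} directly, transferring between complex and real linear relations via the identity $a\Re(X_j)+b\Im(X_j)=\Re((a-ib)X_j)$, which appears naturally when viewing $\mathbb C^d$ as $\mathbb R^{2d}$.

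For the implication $(1)\Rightarrow(2)$, I would start from a non-trivial complex linear relation $\sum_j \lambda_j X_j = c$ with $\lambda_j=\alpha_j+i\beta_j\in\mathbb C$ and $c\in\mathbb C$. Taking real and imaginary parts of this single identity yields two real linear dependences among the $2d$ variables $\Re X_j,\Im X_j$. For any given $(a,b)\in\mathbb R^2$, the goal is then to produce, from these two relations, a linear combination of the specific form $\sum_j \mu_j(a\Re X_j+b\Im X_j)=\nu$ with $(\mu_j)\in\mathbb R^d\setminus\{0\}$ and $\nu\in\mathbb R$. A natural route is to first multiply the complex identity by $(a-ib)$ to obtain another complex identity whose real part has coefficients of the desired shape, then exhibit the appropriate real vector $\mu$ built from $(\alpha_j,\beta_j)$ and $(a,b)$ in the degeneracy subspace of $(a\Re X_j+b\Im X_j)_j$.

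For the implication $(2)\Rightarrow(1)$, I would first specialize (2) to the two directions $(a,b)=(1,0)$ and $(0,1)$ to obtain real linear dependences among $(\Re X_j)_j$ and among $(\Im X_j)_j$. Using the hypothesis (2) at intermediate values of $(a,b)\in\mathbb R^2$, one can coherently select a single real vector $\mu\in\mathbb R^d\setminus\{0\}$ making both $\sum_j \mu_j \Re X_j$ and $\sum_j \mu_j \Im X_j$ simultaneously almost surely constant. This is equivalent to $\sum_j \mu_j X_j$ being almost surely constant in $\mathbb C$, which yields (1) since $\mu\in\mathbb R^d\subset\mathbb C^d$.

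The main obstacle is the $(2)\Rightarrow(1)$ direction: the hypothesis provides infinitely many real degenerate vectors, one for each projection direction, and the difficulty lies in extracting a single coherent real vector that produces a genuine complex linear relation. I expect this to be handled by a geometric argument on the degeneracy subspace $V\subset\mathbb R^{2d}$ of the real-ified vector $X_{\mathbb R}=(\Re X_1,\Im X_1,\ldots,\Re X_d,\Im X_d)$, exploiting its interaction with the standard complex structure $J$ induced by $\mathbb C^d\cong\mathbb R^{2d}$: condition (2) forces $V$ to meet every $d$-dimensional "diagonal" subspace $\{(a\mu,b\mu):\mu\in\mathbb R^d\}$ non-trivially, from which one can extract a $J$-invariant direction encoding a complex linear dependence in the sense of~(1).
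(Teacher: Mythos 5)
Your $(2)\Rightarrow(1)$ argument hinges on producing a \emph{real} vector $\mu\in\mathbb R^d\setminus\{0\}$ such that both $\sum_j\mu_j\Re X_j$ and $\sum_j\mu_j\Im X_j$ are a.s.\ constant, so that the complex relation $\sum_j\mu_j X_j=\mathrm{const}$ holds with real coefficients. This is too strong and fails in general: the complex linear dependence asserted in (1) may genuinely require complex coefficients. For a concrete counterexample take $d=3$, $X_1$ a standard complex Gaussian, $X_2=iX_1$ and $X_3=\Re X_1$. Then (2) holds for all $(a,b)$, since the three real variables $a\Re X_1+b\Im X_1$, $-a\Im X_1+b\Re X_1$, $a\Re X_1$ all lie in the two-dimensional span of $\Re X_1$ and $\Im X_1$; and (1) holds via the complex relation $iX_1-X_2=0$. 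Yet if $\mu\in\mathbb R^3$ satisfies $\sum_j\mu_j X_j=\mathrm{const}$ then separating real and imaginary parts forces $\mu_1\Im X_1+\mu_2\Re X_1=\mathrm{const}$ and $(\mu_1+\mu_3)\Re X_1-\mu_2\Im X_1=\mathrm{const}$, which, by linear independence of $\Re X_1$, $\Im X_1$, $1$, gives $\mu=0$. So no real coefficient vector exists and the strategy cannot succeed. Your closing geometric picture (the degeneracy subspace $V\subset\mathbb R^{2d}$ meeting each diagonal $\{(a\mu,b\mu)\}$, interacting with the complex structure $J$) does point toward the right kind of statement, but it is left entirely unproved and would need to produce a $J$-invariant $2$-plane in $V$, not a single real $\mu$; that is a nontrivial linear-algebraic claim that you would still have to establish.

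The paper avoids this difficulty with an argument of a different flavor. It first characterizes linear dependence of the coordinates of a random vector $Y$ by the almost sure vanishing of $\det(Y^1,\ldots,Y^d)$ for independent copies $Y^i$. It then sets $Y_k(t)=\Re X_k+t\,\Im X_k$ and observes that $D(t)=\det(Y^1(t),\ldots,Y^d(t))$ is a polynomial in $t$. Assumption (2) forces $D(t)=0$ a.s.\ for every real $t$, hence $D\equiv 0$ a.s.\ as a polynomial, and therefore $D(i)=0$ a.s.; since $Y(i)=X$, this yields complex linear dependence of $X_1,\ldots,X_d$ directly, with no need to manufacture real coefficients. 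This ``analytic continuation in $t$'' is exactly what lets the conclusion jump from real to complex coefficients, and it is the step missing from your outline.

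As a side remark, your sketch of $(1)\Rightarrow(2)$ (multiplying the complex relation by $a-ib$ and taking real parts) also does not produce a relation of the required form $\sum_j\mu_j(a\Re X_j+b\Im X_j)=\mathrm{const}$ with real $\mu_j$: expanding the real part gives coefficients $(a\alpha_j+b\beta_j)$ on $\Re X_j$ and $(b\alpha_j-a\beta_j)$ on $\Im X_j$, which are not a common real multiple $\mu_j$ of $(a,b)$ unless all $\beta_j$ vanish. This direction is in fact the less important one --- the paper only invokes $(2)\Rightarrow(1)$ in its application --- but your proposed reduction does not close it either.
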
 
\begin{proof}
The direct assertion $1\Rightarrow 2$ is clear, so we are left to establish the converse $2\Rightarrow 1$. We use the following characterization: the coordinates of a random vector $Y$ with values in $\mathbb C^d$ (resp. $\mathbb R^d$) are linearly dependent on $\mathbb C$ (resp. $\mathbb R$) if and only if $\det(Y^1,\ldots,Y^d)=0$ a.s., where $Y^1,\ldots, Y^d$ are independent copies of $Y$. Indeed if there is a non trivial linear relation between the coordinates of $Y$, then it is also a linear relation between the coordinates of $Y^i$ and we deduce that the determinant is zero. Conversely if the latter is null, we expand the determinant with respect to the last column to obtain a linear relation between the coordinates of $Y^d$, whose coefficients are random minor determinants of size $(d-1)\times (d-1)$, independent from $Y^d$. For each of these minors, two cases can occur: either the minor is not almost surely $0$ and so we obtain a non trivial linear relation between the coordinates of $Y^d$, and so of $Y$ ; or else the minor is $0$ a.s. and we deduce by induction on $d$ that there exists a non trivial linear relation between some of the coordinates of $Y$.
	
Let us now apply the above characterization of non-degeneracy in terms of determinant to the random vector $Y(t)=(Y_1(t),\ldots,Y_d(t))$ where $Y_k(t)=\Re(X_k)+t\Im(X_k)$ and $t$ is a number in $\mathbb R$ or $\mathbb C$. So let $Y^1(t),\ldots, Y^d(t)$ be random independent copies of $Y(t)$ and $D(t)=\det(Y^1(t),\ldots,Y^d(t))$. If we assume the second assertion above, then for every $t$ in $\mathbb R$, $D(t)=0$ a.s., but since $D$ is polynomial in $t$ we deduce that almost surely, $D$ is the zero polynomial. In particular, we have $D(i)=0$ where $i^2=-1$. Since $Y(i)=X$, we deduce the first assumption.
\end{proof}

With this general lemma in hand, we can indeed establish Proposition \ref{prop.kolmorogo} in the remaining case where $X^n=(X_1^n,\ldots,X_d^n)$ takes values in $\mathbb C^d$. Indeed by applying the last Lemma \ref{lem.complexification} to $(X_1^n,\ldots,X_d^n,1)$, we deduce that there exists real numbers $a$ and $b$ so that the vector $Y^n=a\Re(X^n)+b\Im(X^n)$ is non-degenerate, and we can assume $a^2+b^2=1$. Then,  given $x \in \mathbb C^d$ and setting $y:=a\Re(x)+b\Im(x)$ and $T_n:=\sum_{k=1}^n Y^k$, we have the upper bound $||T_n-y||\leq ||S_n-x||$. As a result, the upper bound for $\mathbb P\left( ||T_n-y|| \leq r \right)$ established in the real case in the first part of the proof, also allows to upper bound the corresponding probability $\mathbb P\left( ||S_n-x|| \leq r \right)$ in the complex case.
\end{proof}

\subsection{Estimates on the logarithmic derivative} \label{sec.estimates}

As announced above, we now give some almost sure lower and upper bounds for the logarithmic derivative $S_n=P_n'/P_n$.
The easiest part is to upper bound $|S_n|$. Namely, in the next Lemma \ref{lem.major}, we obtain a rather direct uniform bound on ``generic" circles. The lower bound is more subtle and requires more attention. The estimate we obtain in Lemma \ref{lem.minor} below is weaker and it is not uniform, but still,  it will be sufficient for our purpose. 

\if{
It is based on the anti-concentration estimate establish in Section \ref{sec.concentration} above, noticing that for generic complex numbers $(z_i)_{1\leq i \leq 3}$ and any random variable $Z$ whose support is not finite, the complex random vector $\left( \frac{1}{z_1-Z},\frac{1}{z_2-Z},\frac{1}{z_3-Z}\right)$ is automatically non-degenerate.
}\fi

\begin{lem}\label{lem.major}
Almost surely with respect to $\mathbb P$, there exists a set of $\lambda_{\mathbb C}\otimes \lambda_{\mathbb R}$-full measure of couples $(r,a)\in \mathbb R^+\times \mathbb C$ such that, as $n$ tends to infinity
$$\log^+||S_n||_{C(a,r)}=O(\log(n)).$$
\end{lem}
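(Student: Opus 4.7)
The overall plan is to reduce the upper bound on $\|S_n\|_{C(a,r)}$ to a scalar question: for fixed center $a$, how close is the radius $r$ to the random distances $R_k := |Z_k - a|$? The key geometric input is the reverse triangle inequality, which gives, for every $z \in C(a,r)$ and every $k$,
\[
|z - Z_k| \;\geq\; \bigl||z - a| - |Z_k - a|\bigr| \;=\; |r - R_k|,
\]
and therefore
\[
\|S_n\|_{C(a,r)} \;\leq\; \sum_{k=1}^n \frac{1}{|r - R_k|}.
\]
So it suffices to show that, for $(\mathbb{P}\otimes \lambda_{\mathbb C}\otimes \lambda_{\mathbb R})$-almost every $(\omega,a,r)$, the quantity $m_n(\omega,a,r) := \min_{1 \leq k \leq n} |r - R_k(\omega)|$ is eventually at least $n^{-C}$ for some fixed $C$, which would then yield $\|S_n\|_{C(a,r)} \leq n \cdot n^C$ and hence $\log^+\|S_n\|_{C(a,r)} = O(\log n)$.

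The core estimate is deterministic once $\omega$ is fixed. For each $n$, the ``bad'' set of radii
\[
E_n(\omega,a) \;:=\; \bigl\{ r > 0 \,:\, m_n(\omega,a,r) < n^{-3} \bigr\} \;=\; \bigcup_{k=1}^n \bigl(R_k - n^{-3},\; R_k + n^{-3}\bigr)
\]
has Lebesgue measure at most $2n \cdot n^{-3} = 2 n^{-2}$, and the series $\sum_n 2n^{-2}$ is summable. By the Borel--Cantelli lemma applied to Lebesgue measure on $\mathbb R^+$, the set $\limsup_n E_n(\omega,a)$ has Lebesgue measure zero for every $\omega$ and every $a$. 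Hence, for every $(\omega, a)$ and for $\lambda_{\mathbb R}$-almost every $r > 0$, one has $m_n(\omega,a,r) \geq n^{-3}$ for all $n$ large enough; on that set of radii, the displayed inequality gives $\|S_n\|_{C(a,r)} \leq n^4$ and thus $\log^+\|S_n\|_{C(a,r)} \leq 4 \log n$.

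To conclude, we apply Fubini--Tonelli. The ``bad'' set
\[
B \;:=\; \bigl\{ (\omega,a,r) \,:\, \log^+ \|S_n\|_{C(a,r)} \neq O(\log n) \bigr\}
\]
is measurable (it is a countable union of measurable sets, one for each growth rate $C \in \mathbb N$), and the previous step shows $\int_{\mathbb R^+} \mathds{1}_B(\omega,a,r) \, d\lambda_{\mathbb R}(r) = 0$ for every $(\omega,a)$. Integrating against $\mathbb P \otimes \lambda_{\mathbb C}$ and reversing the order of integration with Fubini gives that, for $\mathbb P$-almost every $\omega$, the set of $(a,r) \in \mathbb C \times \mathbb R^+$ for which the $O(\log n)$ bound fails is a $(\lambda_{\mathbb C}\otimes\lambda_{\mathbb R})$-null set, which is exactly the statement of the lemma.

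The step I would expect to require the most care is ensuring the joint measurability of $B$ so that Fubini applies cleanly; everything else is soft, since no anti-concentration machinery is needed for the upper bound and the geometric step is immediate from the reverse triangle inequality.
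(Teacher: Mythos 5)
Your proof is correct, and it takes a genuinely different route from the paper's. Both arguments begin with the same reverse-triangle reduction $\|S_n\|_{C(a,r)}\leq\sum_{k}|r-|Z_k-a||^{-1}$, but the paper then proceeds probabilistically: it checks (via Fubini) that $\mathbb E\big[|r-|Z_1-a||^{-1/2}\big]<\infty$ for $\lambda_{\mathbb R}$-a.e.\ $r$, invokes the strong law of large numbers to get $\sum_k |r-|Z_k-a||^{-1/2}=O(n)$ $\mathbb P$-a.s., and squares up to obtain $\|S_n\|_{C(a,r)}=O(n^3)$. You instead argue deterministically once $\omega$ is fixed: the set of radii within $n^{-3}$ of some $|Z_k(\omega)-a|$ ($k\leq n$) has $\lambda_{\mathbb R}$-measure at most $2n^{-2}$, so by the first Borel--Cantelli lemma (which needs no finiteness of the reference measure) $\lambda_{\mathbb R}$-a.e.\ radius eventually avoids all these neighborhoods, giving $\|S_n\|_{C(a,r)}\leq n^4$. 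Your argument is a bit more elementary — no moments, no law of large numbers — and as a small bonus it delivers a stronger conclusion: the full-measure set of $(a,r)$ exists for \emph{every} $\omega$, not merely $\mathbb P$-almost every $\omega$ (your final integration against $\mathbb P$ is therefore not even needed). The measurability of your set $B$ is indeed unproblematic, since it can be written as $\bigcap_{C\in\mathbb N}\{\sup_{n\geq 2}\,\log^+\|S_n\|_{C(a,r)}/\log n > C\}$ and each term is built by countable operations from the jointly measurable map $(\omega,a,r)\mapsto \|S_n\|_{C(a,r)}$.
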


\begin{proof}
Let us first consider the case $a=0$.
Since 
\[
\int_{-\infty}^{+\infty}\mathbb E\left[\frac{1}{|r-|Z_1||^{\frac{1}{2}}}\mathds{1}_{|r-|Z_0||\leq 1}\right]dr\leq \int_{-1}^{1}\frac{1}{|r|^{\frac{1}{2}}}dr<+\infty,
\]
we have for $\lambda_{\mathbb R}$-almost every $r>0$ that 
\[
\mathbb E\left[\frac{1}{|r-|Z_1||^{\frac{1}{2}}}\right]= \mathbb E\left[\frac{1}{|r-|Z_1||^{\frac{1}{2}}}\mathds{1}_{|r-|Z_0||\leq 1}\right]+\underbrace{\mathbb E\left[\frac{1}{|r-|Z_1||^{\frac{1}{2}}}\mathds{1}_{|r-|Z_0||> 1}\right]}_{\leq 1}<+\infty.
\]
For such a number $r>0$, we have by the law of large numbers that $\mathbb P$-almost surely 
$$\sum_{k=1}^n \frac{1}{|r-|Z_k||^{\frac{1}{2}}}=O(n).$$ 
Then, we can upper the supremum norm on the circle as follows
$$\begin{array}{ll}\displaystyle||S_n||_{C(0,r)}&\displaystyle\leq \sup\left\{\sum_{k=1}^n\frac{1}{|z-Z_k|}, z\in C(0,r)\right\}
	\displaystyle\leq \sum_{k=1}^n\frac{1}{|r-|Z_k||}\\
	&\displaystyle\leq n \sup_{1\leq k \leq n}\frac{1}{|r-|Z_k||}
\displaystyle\leq n \left(\sum_{k=1}^n \frac{1}{|r-|Z_k||^{\frac{1}{2}}}\right)^2 =O(n^3),
	\end{array}$$
and we deduce that $\log^+ ||S_n||_{C(0,r)}=O(\log n)$. Next, for any $a$ in $\mathbb C$, by applying the above reasoning to $Z_n'=Z_n-a$, we obtain that $\mathbb P$-almost surely, for $\lambda_{\mathbb R}$-almost every $r>0$, $\log^+ ||S_n||_{C(a,r)}=O(\log n)$. 
Finally, let us define the set
\[
E:=\{(\omega,a,r)\in \Omega\times \mathbb C\times \mathbb R_+| \log^+ ||S_n||_{C(a,r)}=O(\log n)\}.\]
We have just proved that for any $a \in \mathbb C$, for $\lambda_{\mathbb R}$-almost all $r>0$ and for $\mathbb P$-almost all $\omega$, we have $(\omega, a,r) \in E$. By Fubini--Tonelli theorem, we then conclude that for $\mathbb P$-almost all $\omega$ in $\Omega$, for $\lambda_{\mathbb C}$-almost all $a\in \mathbb C$ and $\lambda_{\mathbb R}$-almost all $r>0$, $(\omega, a,r) \in E$.
\end{proof}

\begin{lem}\label{lem.minor}
Let us assume here that the base measure $\mu$ has not a finite support. Then $\mathbb P$-almost surely, there exists a set of $\lambda_{\mathbb C}\otimes \lambda_{\mathbb C}\otimes \lambda_{\mathbb C}$-full measure of triplets $(z_1,z_2,z_3) \in \mathbb C^3$ such that, for all $n\geq n_0$ large enough, for at least one of the three complex numbers $z_1,z_2,z_3$ the inequality $|S_n(z_i)| \geq 1$ holds (with the convention $|S_n(z)|=+\infty$ if $z$ is a pole of $S_n$).
\end{lem}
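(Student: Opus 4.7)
The strategy is to apply Proposition \ref{prop.kolmorogo} in dimension $d=3$ to the i.i.d.\ $\mathbb C^3$-valued random vectors
\[
X^k := \left(\frac{1}{z_1-Z_k},\ \frac{1}{z_2-Z_k},\ \frac{1}{z_3-Z_k}\right),
\]
whose partial sums are exactly $\sum_{k=1}^n X^k = (S_n(z_1),S_n(z_2),S_n(z_3))$. As explicitly anticipated in the remark following Proposition \ref{prop.kolmorogo}, working in dimension three produces an anti-concentration bound of order $n^{-3/2}$, which is summable and hence amenable to a Borel--Cantelli argument, whereas the one-dimensional rate $n^{-1/2}$ is not.

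The first step is to verify that $X^1$ is non-degenerate in $\mathbb C^3$ for $\lambda_{\mathbb C}^{\otimes 3}$-almost every triple $(z_1,z_2,z_3)$, specifically whenever its three coordinates are pairwise distinct. Assume that there exist complex numbers $a_0,a_1,a_2,a_3$, not all zero, with
\[
a_0 + \sum_{i=1}^3 \frac{a_i}{z_i - Z_1} = 0 \quad \mathbb P\text{-almost surely}.
\]
If at least one of $a_1,a_2,a_3$ is non-zero, the rational function $f(z) := a_0 + \sum_i a_i/(z_i - z)$ has a genuine pole at the corresponding $z_i$ (since the $z_i$ are distinct), so it is a non-zero rational function with only finitely many zeros. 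But then $Z_1$ would be supported almost surely on this finite set, contradicting the standing hypothesis that $\mu$ has infinite support. Hence $a_1=a_2=a_3=0$, and the relation then forces $a_0=0$, proving non-degeneracy.

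For such a generic triple, Proposition \ref{prop.kolmorogo} applied with $d=3$ and $r=\sqrt{3}$ furnishes a constant $C=C(z_1,z_2,z_3)$ such that
\[
\mathbb P\bigl(\max_{i=1,2,3} |S_n(z_i)| < 1\bigr) \leq \mathbb P\bigl(\|(S_n(z_1),S_n(z_2),S_n(z_3))\| \leq \sqrt{3}\bigr) \leq \frac{C}{n^{3/2}}.
\]
Since $\sum_n n^{-3/2} < +\infty$, the first Borel--Cantelli lemma yields that $\mathbb P$-almost surely the event $\{\max_i |S_n(z_i)| < 1\}$ occurs for only finitely many $n$; equivalently, there is a (random) $n_0$ beyond which $\max_i |S_n(z_i)| \geq 1$.

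Finally, I would swap the quantifiers via Fubini--Tonelli, exactly as in the closing paragraph of the proof of Lemma \ref{lem.major}. The measurable set
\[
E := \bigl\{(\omega,z_1,z_2,z_3) : \exists\, n_0,\ \forall n \geq n_0,\ \max_i |S_n(z_i)(\omega)| \geq 1\bigr\}
\]
has, by what we just proved, a $\mathbf z$-section of full $\mathbb P$-measure for $\lambda_{\mathbb C}^{\otimes 3}$-almost every triple $\mathbf z$; Tonelli then gives that for $\mathbb P$-almost every $\omega$, its $\omega$-section is of full $\lambda_{\mathbb C}^{\otimes 3}$-measure, which is the announced statement. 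The main obstacle — and the only genuinely new input beyond Proposition \ref{prop.kolmorogo} — is the non-degeneracy step, which is also precisely where the infinite-support assumption on $\mu$ must enter; this is a further reason why the finitely supported case had to be disposed of separately in Section \ref{sec.discrete}.
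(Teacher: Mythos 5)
Your proof is correct and follows essentially the same route as the paper's: verify non-degeneracy of the vector $(1/(z_1-Z),1/(z_2-Z),1/(z_3-Z))$ for pairwise distinct $z_i$ using the infinite-support hypothesis, invoke Proposition \ref{prop.kolmorogo} with $d=3$ to get the summable bound $O(n^{-3/2})$, apply Borel--Cantelli, and finish with Fubini--Tonelli to swap the quantifiers. Your non-degeneracy verification is slightly more explicit than the paper's (you separately dispose of the case where only the constant coefficient $a_0$ could be nonzero), but the argument is the same.
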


\begin{proof}[Proof of Lemma \ref{lem.minor}]
Let  $z_1,z_2,z_3$ be three pairwise distinct complex numbers and let $Z$ be a random variable with distribution $\mu$. Let us consider the random vector  
\[
V:=\left( \frac{1}{z_1-Z},\frac{1}{z_2-Z},\frac{1}{z_3-Z}\right) \in \mathbb C^3.
\]
We claim that $V$ is non degenerated in the sense of Definition \ref{def.degenere}. Indeed, if there exists complex numbers $\alpha:=(\alpha_1,\alpha_2,\alpha_3) \in \mathbb C^3\backslash\{0\}$ and $\beta\in \mathbb C$ such that  
 \[
 \frac{\alpha_1}{z_1-Z} +  \frac{\alpha_2}{z_2-Z}+ \frac{\alpha_3}{z_3-Z} + \beta=0,
 \]
then $Z$ lives in the set of roots of a non zero rational function and it contradicts that the support of $\mu$ is infinite.
Thus, we can apply Proposition \ref{prop.kolmorogo}. Namely, if $(Z_k)_{k\in\mathbb N}$ is a sequence of i.i.d. random variables with distribution $\mu$, and if $V^k$ is defined as 
\[
V^k:=\left( \frac{1}{z_1-Z_k},\frac{1}{z_2-Z_k},\frac{1}{z_3-Z_k}\right) \in \mathbb C^3,
\]
so that
\[
(S_n(z_1),S_n(z_2),S_n(z_3)) =\sum_{k=1}^n V^k,
\]
the anti-concentration estimate given by Proposition \ref{prop.kolmorogo} ensures that
\[
\mathbb P( || (S_n(z_1),S_n(z_2),S_n(z_3)|| \leq \sqrt{3}) = O \left(  \frac{1}{n^{3/2}}\right).
\]
Therefore, by Borel--Cantelli Lemma, we deduce that $\mathbb P$-almost surely, i.e. on a set of $\mathbb P$-full measure which depends on the triplet $(z_1,z_2,z_3)$, for $n$ large enough, we have
$|| (S_n(z_1),S_n(z_2),S_n(z_3)|| \geq  \sqrt{3}$ and thus at least of the $|S_n(z_i)|$ is larger than $1$. 
\par
\medskip
At this point, we have thus proved that for $z_1,z_2,z_3$ pairwise distinct complex numbers, $\mathbb P$-almost surely, for $n$ large enough, we have $|S_n(z_i)|\geq 1$ for $i=1$, $2$ or $3$. We conclude as in previous lemma with Fubini--Tonelli Theorem. Precisely, since the set of pairwise distinct triplets of $\mathbb{C}^3$ has $\lambda_{\mathbb C}\otimes\lambda_{\mathbb C}\otimes \lambda_{\mathbb C}$-full measure, what we established above implies that the set
\[
E:=\{ (\omega,z_1,z_2,z_3) \in \Omega\times\mathbb  C^3, \, (\forall i\in \{1,2,3\}, |S_n^{\omega}(z_i)| < 1 )\; \text{for infinitely many} \; n\in \mathbb N \},
\]
is $\mathbb P\otimes\lambda_{\mathbb C}\otimes \lambda_{\mathbb C}\otimes \lambda_{\mathbb C}$-negligible in $\Omega\times \mathbb C\times \mathbb C\times \mathbb C$, and we conclude.

\end{proof}

\subsection{Proof in the general case} \label{sec.convergence} 

We can now complete the proof of our main Theorem \ref{thm.main} under the assumption that the base measure $\mu$ has not a finite support, since this finite support case has been treated appart in Section \ref{sec.discrete}. As briefly mentioned in the introduction, the starting point of our proof relies on Jensen's formula. Its standard version says that if ${f}$ is a meromorphic function on a disk $\overline{D}(a,r)$ without pole or zero at the point $a$, then
\[
\displaystyle \log |f(a)| - \int_0^1 \log |f(a+re^{2\pi i t})|\ dt = \sum_{\rho\in D(a,r)} \log \frac{|\rho-a|}{r} \displaystyle - \sum_{\zeta\in D(a,r)} \log \frac{|\zeta-a|}{r},
\]
where ${\rho}$ and ${\zeta}$ range over the zeros and poles of ${f}$ respectively (counting the multiplicity). The formula thus allows to compare, with logarithmic weights, the numbers of zeros and poles of the function in this disk. In order to compare the distributions $\mu_n$ and $\nu_n$, a very  natural idea is therefore to apply Jensen's formula in a ``generic disk'' in the complex plane, to the logarithmic derivative $S_n=P_n'/P_n$.
\par
\smallskip
An easy fact to verify is that the Jensen's formula on any disk of $\mathbb C$ can be deduced from Jensen's formula on the unit disk $D$, by applying affine transformations $z\mapsto \alpha z+\beta$. For our purpose, we will also need estimates at the neighborhood of the infinity, so we actually want estimates on any disk of the Riemann sphere $\widehat{\mathbb C}=\mathbb C\cup\{\infty\}$. To do this, we will replace the affine transformations by Möbius transformations $z\mapsto \frac{\alpha z+\beta}{\gamma z+\delta}$. The discussion above motivates the introduction of the following new notations:
	\begin{itemize} 
		\item We denote by $\mathcal{A}=\{z\mapsto \alpha z+\beta| \alpha,\beta\in\mathbb{C}, \alpha\not=0\}$  the set of invertible affine transformations of $\mathbb C$. We endow $\mathcal A$ with the measure $\lambda_{\mathcal A}$ inhereted from the Lebesgue measure $\lambda_{\mathbb C}\otimes \lambda_{\mathbb C}$ on $\mathbb C^2$.
		\item We denote by $\mathcal{M}=\{z\mapsto \frac{\alpha z+\beta}{\gamma z+\delta}| \alpha,\beta,\gamma,\delta \in\mathbb{C}, \alpha\delta-\beta\gamma\not=0\}$  the set of invertible Möbius transformations of $\mathbb C$. We endow $\mathcal M$ with the measure $\lambda_{\mathcal M}$ inherated from the Lebesgue measure $\lambda_{\mathbb C}\otimes \lambda_{\mathbb C}\otimes \lambda_{\mathbb C}\otimes \lambda_{\mathbb C}$ on $\mathbb C^4$.
	\end{itemize}
Our choices of measures on both sets $\mathcal A$ and $\mathcal M$ are not very canonical. It might indeed be more natural to use the natural Haar measures on these geometric structures, but the above measures will only serve us to define sets of full measure and negligible, hence our simple choice of product measures.
\par
\smallskip
The key estimate derived from Jensen's inequality that we will use in the sequel is the following inequality. Recall that $C$ denotes the unit circle in $\mathbb C$.

\begin{prop}\label{prop.Jensen}
Let $P$ be a complex polynomial, let $\mathcal Z$ be its set of roots, $\mathcal C$ its set of critical points and let $S=\frac{P'}{P}$ be its logarithmic derivative. Fix a Möbius transformation $u$ in $\mathcal M$ and denote by $a=u^{-1}(0)$ and $C'=u^{-1}(C)$. Then, provided $a$ is not a zero or a pole of $S$, we have
\begin{equation}
\sum_{\rho\in \mathcal C} \log^-|u(\rho)| \displaystyle - \sum_{\zeta \in \mathcal Z} \log^-|u(\zeta)| \leq \log \|S\|_{C'} - \log |S(a)|,
\label{eq.upjensen}
\end{equation}
	where the elements of  $\mathcal C$ and $\mathcal Z$ are counted with multiplicities.
\end{prop}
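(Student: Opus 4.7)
The plan is to pull back $S$ through the Möbius transformation $u$ and apply the standard form of Jensen's formula on the unit disk recalled above. Set $T := S \circ u^{-1}$, which is a rational function on $\widehat{\mathbb C}$; since $T(0) = S(a)$, the hypothesis on $a$ guarantees that $T$ has neither a zero nor a pole at the origin, as required. Because $u$ is a biholomorphism of $\widehat{\mathbb C}$, the zeros and poles of $T$ inside $D$, counted with their natural orders, are in bijection via $z \mapsto u(z)$ with the zeros and poles of $S$ lying in the component of $\widehat{\mathbb C}\setminus C'$ that contains $a$.

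Applied to $T$ on $\bar D$, Jensen's formula reads
\begin{equation*}
\log|S(a)| \;-\; \int_0^1 \log|T(e^{2\pi i t})|\, dt \;=\; \sum_{w\in D,\,T(w)=0} \log|w| \;-\; \sum_{w\in D,\,T(w)=\infty} \log|w|,
\end{equation*}
each sum being taken with multiplicity. Since $z = u^{-1}(e^{2\pi i t})$ parametrizes $C'$, the integral is trivially at most $\log\|S\|_{C'}$; using $\log^-|w|=-\log|w|$ for $|w|<1$, this yields
\begin{equation*}
\sum_{w\in D,\,T(w)=0} \log^-|w| \;-\; \sum_{w\in D,\,T(w)=\infty} \log^-|w| \;\leq\; \log\|S\|_{C'} - \log|S(a)|.
\end{equation*}

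It then remains to rewrite the left-hand side in the form appearing in the proposition. Writing $P(z) = \prod_k (z-z_k)^{m_k}$ with the $z_k$ pairwise distinct, and letting $c_1,\dots,c_s$ (with orders $\mu_1,\dots,\mu_s$) denote the roots of $P'$ which are not roots of $P$, a direct cancellation gives $S(z) = n\prod_j (z-c_j)^{\mu_j}/\prod_k (z-z_k)$. Thus the zeros of $S$ on $\widehat{\mathbb C}$ are the $c_j$ with orders $\mu_j$, together with a simple zero at $\infty$ (coming from $\deg P' < \deg P$), while its poles are the distinct $z_k$, each simple. Pushing forward through $u$, the inequality just obtained becomes
\begin{equation*}
\sum_j \mu_j \log^-|u(c_j)| \;+\; \log^-|u(\infty)| \;-\; \sum_k \log^-|u(z_k)| \;\leq\; \log\|S\|_{C'} - \log|S(a)|,
\end{equation*}
whereas decomposing the multisets $\mathcal C = \{z_k \text{ with mult. } m_k-1\} \sqcup \{c_j \text{ with mult. } \mu_j\}$ and $\mathcal Z = \{z_k \text{ with mult. } m_k\}$ gives
\begin{equation*}
\sum_{\rho\in\mathcal C}\log^-|u(\rho)| \;-\; \sum_{\zeta\in\mathcal Z}\log^-|u(\zeta)| \;=\; \sum_j \mu_j\log^-|u(c_j)| \;-\; \sum_k \log^-|u(z_k)|.
\end{equation*}
The two left-hand sides differ by exactly the non-negative quantity $\log^-|u(\infty)|$, which may simply be dropped to obtain the proposition. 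The only real subtlety is therefore the extra zero of $S$ at infinity, which after pushforward may land inside $D$ and produce a contribution which, luckily, is non-negative and so can be discarded; everything else is careful multiplicity bookkeeping.
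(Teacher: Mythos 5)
Your proof is correct and follows the same route as the paper: pull $S$ back through $u$, apply Jensen's formula on the unit disk, bound the integral by the sup norm, and discard the nonnegative contribution $\log^-|u(\infty)|$ coming from the simple zero of $S$ at infinity. The only difference is one of presentation: where the paper treats the simple-root case first and then asserts that for multiple roots ``the two sums on the left hand side compensate,'' you carry out the multiplicity bookkeeping explicitly by writing $S(z)=n\prod_j(z-c_j)^{\mu_j}/\prod_k(z-z_k)$ and decomposing $\mathcal C$ into the multiset of repeated roots and the multiset of genuinely new critical points. That explicit accounting is a small improvement in clarity (the paper's proof in fact has the words ``zeros'' and ``poles'' of $S$ interchanged in the sentence identifying them with $\mathcal C\cup\{\infty\}$ and $\mathcal Z$, an evident slip your version avoids), but the argument is the same.
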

\begin{rmk}~Let us make a few remarks on the last statement.
	\begin{enumerate}
		\item At first sight, it might not be clear that in the inequality \eqref{eq.upjensen}, we only count the roots and critical points of $P$ belonging to the disk $C'$, but it is implicit since the function $\log^-|u(\cdot)|$ is null outside this disk.
		\item Let us emphasize that the inequality \eqref{eq.upjensen} is not stated in modulus, it only provides an upper bound. As such, it only allows to bound by above, up to a remainder term, the distribution of the critical points of $P$ by the distribution of the roots of $P$ applied to a certain class of test functions. As we shall see below, this one-sided estimate will be still sufficient for our purpose.
	\end{enumerate}
	
\end{rmk}

\begin{proof}[Proof of Proposition \ref{prop.Jensen}]

The Jensen's formula applied on the unit disk says that if ${f}$ is a meromorphic function on a neighborhood of the unit disk $\overline{D}$ such that ${0}$ is neither a zero nor a pole of ${f}$, then
\[
\displaystyle \log |f(0)| = \int_0^1 \log |f(e^{2\pi i t})|\ dt + \sum_{\rho: |\rho| < 1} \log |\rho| \displaystyle - \sum_{\zeta: |\zeta| < 1} \log |\zeta|,
\]
where ${\rho}$ and ${\zeta}$ range over the zeros and poles of ${f}$ respectively (counting multiplicity). Upper bounding the above integral by the supremum norm on the circle $C$, we get
\[
\sum_{\rho} \log^-|\rho|- \sum_{\zeta} \log^-|\zeta| \displaystyle  \leq \log \|f\|_C - \log |f(0)|.
\]
Next, we apply it to $f=S\circ u^{-1}$. Then, the zeros and poles of $f$ are the images by $u$ of zeros and poles of $S$, where we count $\infty$ as a simple zero of $S$. So
\[
\sum_{\rho} \log^-|u(\rho)|-\sum_{\zeta} \log^-|u(\zeta)| \displaystyle  \leq \log \|S\|_{u^{-1}(C)} - \log |S(u^{-1}(0))|,
\]
where ${\rho}$ range over the zeros of ${S}$ and ${\zeta}$ range over its zeros. Now, note that if $P$ has simple roots, then the set of poles of $S=P'/P$ is precisely $\mathcal C$,  and its set of zeros is $\mathcal{Z}\cup\{\infty\}$. We then simply bound by below the term $\log^-|u(\infty)|$ by $0$ and we get the claimed inequality. In the case where $P$ has multiple roots, the two sums on the left hand side compensate and the inequality still holds.
\end{proof}

As announced above, the next step of the proof consists in applying Proposition \ref{prop.Jensen} to our random polynomial $P=P_n$ and make use of the almost sure estimates established in Section \ref{sec.estimates} to upper bound the right hand side in the corresponding Equation \eqref{eq.upjensen}. Before expliciting the resulting almost sure upper bound, let us note that the empirical measures $\nu_n$ are probability measure on $\mathbb C$, but can also be seen as probability measures on the Riemann sphere $\widehat{\mathbb C}=\mathbb C \cup \{\infty\}$. The space of probability measures on $\widehat{\mathbb C}$ being compact for the weak topology, we can thus assume that $\mathbb P$-almost surely, along a subsequence of integers, the sequence $(\nu_n)$ converges weakly to a cluster value $\widehat{\nu}_\infty$. Doing so, we obtain the following lemma.

\begin{lem}\label{lem.clusterprop}
Let us assume here that the base measure $\mu$ has not a finite support.
Almost surely with respect to $\mathbb P$, every cluster value $\widehat{\nu}_\infty$ of the sequence of probability measures $(\nu_n)$ for the weak topology on $\widehat{\mathbb C}$ satisfies the inequality
	$$\int_{\widehat{\mathbb{C}}}\log^-|u|d\widehat{\nu}_\infty \leq \int_{\mathbb{C}}\log^-|u|d\mu,$$
	for $\lambda_{\mathcal{M}}$-almost every $u$ in $\mathcal{M}$.
\end{lem}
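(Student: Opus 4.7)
I start by applying Proposition \ref{prop.Jensen} to $P = P_n$ with a Möbius transformation $u \in \mathcal M$, $a = u^{-1}(0)$, $C' = u^{-1}(C)$. Dividing by $n - 1$ and using $\nu_n = \frac{1}{n-1} \sum_\rho \delta_\rho$ together with $\mu_n = \frac{1}{n}\sum_\zeta \delta_\zeta$ yields the key inequality
\begin{equation*}
\int \log^- |u|\, d\nu_n \leq \frac{n}{n-1} \int \log^- |u|\, d\mu_n + \frac{\log ||S_n||_{C'} - \log|S_n(a)|}{n-1}.
\end{equation*}
Two of the three right-hand-side contributions are easy to handle for $\lambda_{\mathcal M}$-a.e.\ $u$: the strong law of large numbers applied to the i.i.d.\ variables $\log^-|u(Z_k)|$ gives $\int \log^- |u|\, d\mu_n \to \int \log^- |u|\, d\mu$ almost surely (if the limit is $+\infty$ the desired inequality is trivial, so we may assume it is finite), and Lemma \ref{lem.major} combined with a standard Fubini--Tonelli change of parameters sending $u$ to the couple $(\tilde a, \tilde r)$ where $C' = C(\tilde a, \tilde r)$ gives $\log ||S_n||_{C'}/n \to 0$ from above almost surely.

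The delicate term is $-\log|S_n(a)|/(n-1)$, for which no a.s.\ pointwise lower bound on $|S_n(a)|$ is available. I circumvent this via Lemma \ref{lem.minor} and a three-point argument, working one cluster value at a time. Fix a cluster value $\widehat{\nu}_\infty$, realized as $\nu_{n_k} \to \widehat{\nu}_\infty$ along a subsequence, and consider triples $(u^{(1)}, u^{(2)}, u^{(3)}) \in \mathcal M^3$ whose centers $a_i := (u^{(i)})^{-1}(0)$ form a generic triplet in the sense of Lemma \ref{lem.minor}. That lemma yields $\max_i |S_n(a_i)| \geq 1$ for all $n$ large enough; a pigeonhole extraction along $(n_k)$ produces an index $i^* \in \{1,2,3\}$ and a sub-subsequence $(n_{k_j})_j$ such that $|S_{n_{k_j}}(a_{i^*})| \geq 1$ for every $j$. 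Plugging this into the key inequality with $u = u^{(i^*)}$ gives $\int \log^- |u^{(i^*)}|\, d\nu_{n_{k_j}} \leq \int \log^- |u^{(i^*)}|\, d\mu + o(1)$, and the lower semicontinuity of $\widehat\nu \mapsto \int \log^- |u^{(i^*)}|\, d\widehat\nu$ on $\mathcal P(\widehat{\mathbb C})$ (applied to $\nu_{n_{k_j}} \to \widehat{\nu}_\infty$) then yields $\int \log^- |u^{(i^*)}|\, d\widehat{\nu}_\infty \leq \int \log^- |u^{(i^*)}|\, d\mu$. In other words, for every generic triplet at least one of the three transformations lies outside the bad set $B_{\widehat{\nu}_\infty} := \{u \in \mathcal M : \int \log^- |u|\, d\widehat{\nu}_\infty > \int \log^- |u|\, d\mu\}$, so $B_{\widehat{\nu}_\infty}^3$ is a $\lambda_{\mathcal M}^{\otimes 3}$-null subset of $\mathcal M^3$; Tonelli then forces $\lambda_{\mathcal M}(B_{\widehat{\nu}_\infty}) = 0$, which is the claim for this cluster value.

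The main obstacle is precisely this lower bound on $|S_n(a)|$: under our minimal assumptions on $\mu$, no summable anticoncentration estimate at a single point $a$ is available (the one-dimensional Kolmogorov--Rogozin bound only gives $O(n^{-1/2})$), and one cannot directly deduce $|S_n(a)| \geq e^{-\varepsilon n}$ a.s.\ via Borel--Cantelli. Lemma \ref{lem.minor}'s three-point form, whose proof exploits the summability of $n^{-3/2}$, provides the crucial substitute, and the Tonelli argument on $\mathcal M^3$ is what translates the three-point conclusion back into a single-$u$ statement. A helpful structural feature of the statement being proved is that the $\lambda_{\mathcal M}$-null set is allowed to depend on $\widehat{\nu}_\infty$, which decouples the problem one cluster value at a time and avoids the considerably more delicate task of handling all cluster values with a common null set.
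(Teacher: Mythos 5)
Your proof is correct and essentially mirrors the paper's: apply Proposition~\ref{prop.Jensen}, control the circle term via Lemma~\ref{lem.major} and the empirical term via the strong law of large numbers, handle the single-point term via Lemma~\ref{lem.minor}'s three-point anticoncentration, then de-triplicate via Tonelli on $\mathcal M^3$. Your explicit pigeonhole extraction of a fixed index $i^*$ along a sub-subsequence, and your direct invocation of lower semicontinuity of $\widehat\nu\mapsto\int\log^-|u|\,d\widehat\nu$ in place of the paper's truncation-plus-monotone-convergence derivation of the same fact, are merely expository differences.
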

\begin{proof}[Proof of Lemma \ref{lem.clusterprop}]
	First, let us notice that there exists a set of full probability, say $\Omega_1$ so that the following properties hold:	
for $\lambda_{\mathcal{M}}\otimes\lambda_{\mathcal{M}}\otimes\lambda_{\mathcal{M}}$-almost every triplet of Möbius transformations $(u_1,u_2,u_3)$:
	\begin{enumerate}
		\item[$i)$] $\log\|S_n\|_{u_i^{-1}(C)}=O(\log n)$ as $n\to +\infty$ for all indexes $i=1$, $2$ and $3$.
		\item[$ii)$] For $n$ large enough, $\log |S_n(u_i^{-1}(0))|\leq 0$ for at least one index $i=1$, $2$ or $3$.
		\item[$iii)$] $\int_{\mathbb C} \log^- |u_i| d\mu_n\to \int_{\mathbb C} \log^- |u_i| d\mu$ when $n\to +\infty$ for all indexes $i=1$, $2$ and $3$.
	\end{enumerate}	
Indeed, the first point is a consequence of Lemma \ref{lem.major} since for $\lambda_{\mathcal{M}}$-almost every $u$ in $\mathcal{M}$, $u^{-1}(C)$ is a circle $C(a,r)$ where $(a,r)$ belongs to the $\lambda_{\mathbb C}\otimes\lambda_{\mathbb R}$-full measure set where the conclusion of the lemma holds. 
	
The second point is a same way a consequence of Lemma \ref{lem.minor} (we use here that $\mu$ has not a finite support) since for $\lambda_{\mathcal{M}}\otimes\lambda_{\mathcal{M}}\otimes\lambda_{\mathcal{M}}$-almost every triplet $(u_1,u_2,u_3)$ in $\mathcal M$, $(u_1^{-1}(0),u_2^{-1}(0),u_3^{-1}(0))$ belongs to the $\lambda_{\mathbb C}\otimes\lambda_{\mathbb C}\otimes\lambda_{\mathbb C}$-full measure set of $\mathbb C^3$ where the conclusion of the lemma holds.
		
The third point is a straighforward consequence of the law of large numbers, since $\int_{\mathbb C} \log^- |u_i| d\mu_n$ is the sequence of empirical means of $Y_n=\log^- |u_i(Z_n)|$ and $\int_{\mathbb C} \log^- |u_i| d\mu$ is its expectation. Since $Y_n\geq 0$, it is not required that $Y_n$ is integrable.
\par
\smallskip
Next, we are going to prove that the conclusion of the statement holds for every event  $\omega\in \Omega_1$. So let us fix such an event $\omega$, and let us also fix  $\widehat{\nu}_\infty=\widehat{\nu}_\infty^{\omega}$ a cluster value of $(\nu_n^{\omega}$ on $\widehat{\mathbb C}$ for the weak topology.
\par
\smallskip
Let us fix for the moment $u_1, u_2, u_3$ in $\mathcal{M}$ so that the three properties stated above are satisfied.  Using points $i)$ and $ii)$ associated with Proposition \ref{prop.Jensen} with $P=P_n$, $u=u_1$, $u_2$ and $u_3$, we deduce that that for every integer $n$ there exists $i\in\{1,2,3\}$ such that
 $$\sum_{\rho} \log^-|u_i(\rho)| \leq \sum_{\zeta} \log^-|u_i(\zeta)| \displaystyle +O(\log n),$$
 where $\rho$ range over the critical points of $P_n$ and $\zeta$ range over its roots. We normalize the last inequality by dividing by $n$ and we get that for every integer $n$ there exists $i\in\{1,2,3\}$ such that
 \begin{equation}\label{ineq.distrib}
 \left(1-\frac{1}{n}\right)\int_{\mathbb C} \log^-|u_i|d\nu_n \leq \int_{\mathbb C} \log^-|u_i|d\mu_n+O\left(\frac{\log n}{n}\right).
  \end{equation}
By point $iii)$ above, letting $n$ go to infinity, we then get that for all $i\in\{1,2,3\}$
 \begin{equation}\label{eq.limmu}
  \lim_{n\to +\infty} \int_{\mathbb C} \log^- |u_i| d\mu_n=\int_{\mathbb C} \log^- |u_i| d\mu.
 \end{equation}
On the other hand, since $\widehat{\nu}_\infty$ is a cluster value of $(\nu_n)$, there exists an extraction $(n_k)$ such that  $\widehat{\nu}_\infty=\lim_{k\to +\infty} \nu_{n_k}$. Setting $\log_M^-(x)=\log^-(x) \wedge M$, the function $x \mapsto \log_M^-|u_i|$ is continuous on $\widehat{\mathbb C}$ for all $i\in\{1,2,3\}$ , so
  	$$
  	\int_{\widehat{\mathbb C}}\log^{-}_M|u_i|d\widehat{\nu}_\infty=\lim_{k \to +\infty} \int_{\mathbb C}  \log^{-}_M|u_i| d\nu_{n_k}\leq \limsup_{k \to +\infty} \int_{\mathbb C}  \log^{-}|u_i|d\nu_{n_k}.
  	$$
  	Letting then $M$ go to infinity, we obtain by monotone convergence
  	\begin{equation}\label{eq.limnu}
  	\int_{\widehat{\mathbb C}}\log^{-}|u_i|d\widehat{\nu}_\infty\leq \limsup_{k \to +\infty} \int_{\mathbb C}  \log^{-}|u_i|d\nu_{n_k}.
  	\end{equation}
Thus, letting $n$ tend to infinity along the subsequence $(n_k)_{k\in\mathbb N}$ in the inequality (\ref{ineq.distrib}), we deduce from \eqref{eq.limmu} and \eqref{eq.limnu} that there exists $i\in\{1,2,3\}$ such that
	$$\int_{\widehat{\mathbb C}}\log^{-}|u_i|d\widehat{\nu}_\infty\leq \int_{\mathbb C}\log^{-}|u_i|d\mu.$$
At this point, we obtained that for almost for $\lambda_{\mathcal{M}}\otimes\lambda_{\mathcal{M}}\otimes\lambda_{\mathcal{M}}$-almost every $u_1,u_2,u_3$ in $\mathcal{M}$,  there exists $i\in\{1,2,3\}$  such that $\int_{\widehat{\mathbb C}}\log^{-}|u_i|d\widehat{\nu}_\infty\leq \int_{\mathbb C}\log^{-}|u_i|d\mu$. It means that if we denote by $E$ the set
$$
E:=\left\{u\in\mathcal{M}\, \Big| \,\int_{\widehat{\mathbb C}}\log^{-}|u|d\widehat{\nu}_\infty> \int_{\mathbb C}\log^{-}|u|d\mu\right\},
$$
then the product set 
$$
E\times E\times E=\left\{(u_1,u_2,u_3)\in\mathcal{M}^3\, \big |\, \int_{\widehat{\mathbb C}}\log^{-}|u_i|d\widehat{\nu}_\infty> \int_{\mathbb C}\log^{-}|u_i|d\mu \text{ for all } i\in \{1,2,3\}\right\}
$$
 is $\lambda_{\mathcal{M}}\otimes\lambda_{\mathcal{M}}\otimes\lambda_{\mathcal{M}}$-negligible. As a result, we deduce that $E$ is $\lambda_{\mathcal{M}}$-negligible, hence the result.
\end{proof}

Finally, we want to verify that the conclusion of Lemma \ref{lem.clusterprop} is sufficient to ensure that $\widehat{\nu}_\infty=\mu$. That is the object of the two next lemmas.

\begin{lem}\label{lem.affine}
	Let $m_1$ and $m_2$ two positive finite measures on $\mathbb C$ so that 
	$$\int_{\mathbb{C}}\log^-|u|dm_1 \leq \int_{\mathbb{C}}\log^-|u|dm_2$$
	for $\lambda_{\mathcal A}$-almost every affine transformation $u$ of $\mathcal{A}$. Then $m_1\leq m_2$ on $\mathbb C$. In particular, if $m_1$ and $m_2$ are probability measures then $m_1=m_2$.
\end{lem}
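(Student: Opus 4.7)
The plan is to recast the hypothesis as a pointwise comparison between two positive convolutions and then extract $m_1\le m_2$ by an approximate-identity argument as a scaling parameter tends to zero.

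For $u(z)=\alpha z+\beta\in\mathcal{A}$, I set $a:=-\beta/\alpha$ and $r:=1/|\alpha|$, so that $\log^-|u(z)|=L_r(z-a)$ where $L_r(y):=\log^+(r/|y|)$ is a nonnegative, radially symmetric kernel. Then
\[
\int_{\mathbb{C}}\log^-|u|\,dm=(m\ast L_r)(a).
\]
The change of variables $(\alpha,\beta)\mapsto(a,r,\arg\alpha)$ is a diffeomorphism of the corresponding open sets, and $\log^-|u|$ is independent of $\arg\alpha$, so the hypothesis reformulates as
\[
(m_1\ast L_r)(a)\le (m_2\ast L_r)(a)\quad\text{for }\lambda_{\mathbb{C}}\otimes\lambda_{\mathbb{R}_+}\text{-a.e. }(a,r)\in\mathbb{C}\times\mathbb{R}_+.
\]

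I next fix an arbitrary nonnegative continuous compactly supported function $f$ on $\mathbb{C}$ and integrate the above inequality against $f(a)\,da$. By Tonelli's theorem and the evenness of $L_r$, this produces, for almost every $r>0$,
\[
\int_{\mathbb{C}}(f\ast L_r)\,dm_1\le \int_{\mathbb{C}}(f\ast L_r)\,dm_2.
\]

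The crux of the argument is then to let $r\to 0$. A direct computation gives $\|L_r\|_{L^1(\mathbb{C})}=\pi r^2/2$, while the scaling identity $L_r(y)=L_1(y/r)$ shows that $\tfrac{2}{\pi r^2}L_r$ is a probability density supported in $D(0,r)$ that converges weakly to $\delta_0$. Consequently, for every nonnegative $f\in C_c(\mathbb{C})$, the functions $\tfrac{2}{\pi r^2}(f\ast L_r)$ are supported in a fixed neighborhood of $\mathrm{supp}(f)$, uniformly bounded by $\|f\|_\infty$, and converge pointwise to $f$. Since $m_1$ and $m_2$ are finite measures, dominated convergence applied along a sequence $r_n\to 0$ inside the good set yields $\int f\,dm_1\le\int f\,dm_2$. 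As $f$ was arbitrary, this gives $m_1\le m_2$; the equality of total masses in the probability-measure case then forces $m_1=m_2$. The only genuinely delicate point is the scaling limit $r\to 0$, which reduces to standard approximate-identity reasoning once the convolution reformulation is in place.
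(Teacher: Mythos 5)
Your proof is correct and essentially the same as the paper's: both integrate the hypothesis against a nonnegative test function, apply Tonelli, and let the scaling parameter $r \to 0$ by dominated convergence to recover $\int \varphi\,dm_1\le\int\varphi\,dm_2$. The paper realizes the $r\to 0$ limit by a change of variables $z=w+rz'$ (so the common factor $\int\log^-|z'|\,dz'$ factors out and cancels), while you normalize $L_r$ by its $L^1$-mass $\pi r^2/2$ to get an approximate identity supported in $D(0,r)$ converging weakly to $\delta_0$; these are two phrasings of the same computation. One small point where your write-up is actually a bit more careful than the paper: the paper substitutes $u(w)=(w-z)/r$, which literally confines $\alpha$ to $\mathbb R_+$, a $\lambda_{\mathcal A}$-null set; your explicit observation that $\log^-|u|$ depends on $\alpha$ only through $|\alpha|$ and that $(\alpha,\beta)\mapsto(a,r,\arg\alpha)$ is a diffeomorphism justifies the reduction to generic $(a,r)$ cleanly. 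Both work; the difference is cosmetic.
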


\begin{proof}
Take $\varphi$ a non-negative, continuous bounded function on $\mathbb C$. For $\lambda_{\mathbb R}$-almost every $r>0$ and for $\lambda_{\mathbb C}$-almost every $z\in\mathbb{C}$, by using the assumption with $u(w)=\frac{w-z}{r}$, we have
\[
\varphi(z) \int_{\mathbb C}  \log^{-}\left( \frac{|w-z|}{r} \right)dm_1(w) \leq \varphi(z) \int_{\mathbb C}  \log^{-}\left( \frac{|w-z|}{r} \right)dm_2(w).
\]
Integrating over $\mathbb C$ we get that for $\lambda_{\mathbb R}$-almost every $r>0$,
\[
\iint_{\mathbb C^2} \varphi(z)  \log^{-}\left( \frac{|w-z|}{r} \right)dm_1(w) dz \leq \iint_{\mathbb C^2}  \varphi(z)  \log^{-}\left( \frac{|w-z|}{r} \right)dm_2(w) dz.
\]
Performing the change of variables $z= w+rz'$ and dividing by $r$ we get
\[
\int_{\mathbb C} \left( \int_{\mathbb C} \varphi(w+rz')dm_1(w) \right) \log^-(|z'|)dz' \leq \int_{\mathbb C} \left( \int_{\mathbb C} \varphi(w+rz')dm_2(w)\right) \log^-(|z'|)dz'.
\]
Since $\int_{\mathbb C} \log^-(|z'|)dz'<+\infty$, letting $r$ go to zero, by dominated convergence, we obtain after simplification
\[
\int_{\mathbb C}  \varphi(w)dm_1(w)  \leq \int_{\mathbb C} \varphi(w)dm_2(w).
\]
This implies the result.
\end{proof}

The above lemma  would be sufficient to conclude if we knew in advance that the sequence $(\nu_n)$ is tight, seen as a sequence of measures on $\mathbb C$, to ensure that its cluster values on $\mathbb C$ are probability measures. To conclude in our case where the measures $(\nu_n)$ are indeed tight, but seen as measures on $\widehat{\mathbb C}$, we thus need the following variation of the last Lemma \ref{lem.affine} on the Riemann sphere.

\begin{lem}\label{lem.homo}
	Let $\widehat{m_1}$ and $\widehat{m_2}$ two finite measures on $\widehat{\mathbb C}$ so that 
	$$\int_{\widehat{\mathbb{C}}}\log^-|u|d\widehat{m_1} \leq \int_{\widehat{\mathbb{C}}}\log^-|u|d\widehat{m_2}$$
	for $\lambda_{\mathcal M}$-almost every Möbius transformation $u$ in $\mathcal{M}$. Then $\widehat{m_1}\leq \widehat{m_2}$ on $\widehat{\mathbb C}$. In particular, if $\widehat{m_1}$ and $\widehat{m_2}$ are probability measures then $\widehat{m_1}=\widehat{m_2}$.
\end{lem}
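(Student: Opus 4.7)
The plan is to reduce Lemma \ref{lem.homo} to Lemma \ref{lem.affine} in two stages: first, derive the relevant affine-type inequality for the restrictions $m_i := \widehat{m_i}|_{\mathbb{C}}$ by approximating affine maps by Möbius maps; second, handle the possible atoms of $\widehat{m_i}$ at infinity by a global Möbius change of variables.

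For the first stage, consider the family $u_\gamma(w) := \frac{\alpha(w - z_0)}{\gamma w + \delta}$ parametrized by $(\alpha, z_0, \gamma, \delta) \in \mathbb{C}^4$, which covers $\lambda_{\mathcal M}$-almost every Möbius map via the diffeomorphic change of coordinates $\beta = -\alpha z_0$. As $\gamma \to 0$, $u_\gamma$ converges pointwise on $\mathbb{C}$ to the affine map $v(w) = \alpha(w - z_0)/\delta$, while $u_\gamma(\infty) = \alpha/\gamma \to \infty$, so $\log^-|u_\gamma(\infty)| = 0$ for $|\gamma|$ small and the Möbius integral coincides with $\int_\mathbb{C} \log^-|u_\gamma|\, dm_i$. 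A Fubini extraction produces a sequence $\gamma_n \to 0$ for which the hypothesis inequality holds simultaneously for all $n$ and for Lebesgue-a.e.\ $(\alpha, z_0, \delta)$; setting $r := |\delta/\alpha|$ and applying dominated convergence (the support of $\log^-|u_{\gamma_n}|$ lies in a fixed bounded set for $|\gamma_n|$ small and is dominated there by $\log(3/2)\cdot\mathbf{1}_{|w-z_0|\leq 2r} + \log^-(|w-z_0|/r)$), one obtains for $\lambda_{\mathbb C}\otimes\lambda_{\mathbb R}$-almost every $(z_0, r)$ the inequality
\[
\int_{\mathbb C} \log^-\!\left(\frac{|w-z_0|}{r}\right) dm_1(w) \;\leq\; \int_{\mathbb C} \log^-\!\left(\frac{|w-z_0|}{r}\right) dm_2(w).
\]
This is precisely the inequality invoked in the proof of Lemma \ref{lem.affine} (which uses only affine maps of this form), so the argument there goes through verbatim and gives $m_1 \leq m_2$ on $\mathbb{C}$.

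For the second stage, observe that the hypothesis is invariant under pushforward by any fixed $\psi \in \mathcal{M}$: the substitution $u \mapsto u \circ \psi$ transforms the $\psi_*\widehat{m_i}$-integral of $\log^-|u|$ into the $\widehat{m_i}$-integral of $\log^-|u \circ \psi|$, and the induced reparametrization acts on the coefficients $(\alpha, \beta, \gamma, \delta) \in \mathbb{C}^4$ by right-multiplication with the matrix of $\psi$, an invertible linear map preserving $\lambda_{\mathcal M}$-null sets. Applying the first stage to $(\psi_*\widehat{m_1}, \psi_*\widehat{m_2})$ for the inversion $\psi(z) = 1/z$ yields $(\psi_*\widehat{m_1})|_{\mathbb C} \leq (\psi_*\widehat{m_2})|_{\mathbb C}$; evaluating at the singleton $\{0\} = \{\psi(\infty)\}$ gives $\widehat{m_1}(\{\infty\}) \leq \widehat{m_2}(\{\infty\})$. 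Combined with the first stage, this produces $\widehat{m_1} \leq \widehat{m_2}$ on all of $\widehat{\mathbb C}$.

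The main obstacle is the dominated convergence step in the first stage: one must verify that the supports of $\log^-|u_{\gamma_n}|$ stay in a fixed bounded subset of $\mathbb{C}$ as $\gamma_n \to 0$ and exhibit an $m_i$-integrable bound independent of $n$. The $m_i$-integrability of $w \mapsto \log^-(|w-z_0|/r)$ for $\lambda_{\mathbb C}$-almost every $z_0$ follows, by Fubini, from the finite double integral
\[
\iint_{\mathbb C \times \mathbb C} \log^-\!\left(\frac{|w-z_0|}{r}\right) dm_i(w)\, dz_0 \;=\; m_i(\mathbb{C})\int_{\mathbb C} \log^-\!\left(\frac{|z|}{r}\right) dz \;<\; +\infty.
\]
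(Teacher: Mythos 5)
Your proof is correct, and it takes a genuinely different route from the paper's. Both arguments reduce to Lemma~\ref{lem.affine}, but the mechanisms differ. The paper observes that for $\lambda_{\mathcal M}$-almost every fixed $v$, the push-forwards $v_*\widehat{m_i}$ have no atom at $\infty$ (because $v^{-1}(\infty)$ generically avoids the countably many atoms of $\widehat{m_i}$) \emph{and} already satisfy the affine hypothesis, since $u\circ v$ is again a $\lambda_{\mathcal M}$-generic Möbius map when $u$ is a $\lambda_{\mathcal A}$-generic affine map; Lemma~\ref{lem.affine} then gives $v_*\widehat{m_1}\leq v_*\widehat{m_2}$, and letting $v\to\mathrm{id}$ transports the inequality back. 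This avoids any analytic limiting argument: no degeneration of Möbius maps to affine ones is needed, and the atom at $\infty$ is handled once and for all by the generic conjugation. Your proof instead degenerates a family $u_\gamma$ of Möbius maps to an affine map as $\gamma\to 0$, using a Fubini extraction of a sequence $\gamma_n\to 0$ together with dominated convergence (with the necessary uniform bounded-support and domination checks, which you carry out correctly), to derive the affine inequality directly for $\widehat{m_i}|_{\mathbb C}$; you then handle the atom at $\infty$ by a separate push-forward under the inversion $z\mapsto 1/z$. Your route is more hands-on and requires the additional dominated convergence verification, whereas the paper's route is slicker but relies on the two soft facts that generic composition $u\circ v$ preserves $\lambda_{\mathcal M}$-genericity and that generic $v^{-1}(\infty)$ misses the atoms. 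One small point worth making explicit in your Stage~1 is that Lemma~\ref{lem.affine}'s proof only consumes the inequality for real $r>0$, i.e.\ along the three-real-parameter family $u(w)=(w-z_0)/r$, which is $\lambda_{\mathcal A}$-negligible in the four-real-dimensional $\mathcal A$; this is harmless because the integrands depend on $u$ only through $|u|$, so the $\lambda_{\mathcal A}$-a.e.\ statement is equivalent, after quotienting by the rotation $u\mapsto e^{i\theta}u$, to the a.e.\ $(z_0,r)$ statement you actually produce.
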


\begin{proof}
We use here the fact that if $u$ is a $\lambda_{\mathcal A}$-generic element of $\mathcal A$ and if $v$ is a $\lambda_{\mathcal M}$-generic element of $\mathcal M$, then $u \circ v$ is a 	$\lambda_{\mathcal M}$-generic element of $\mathcal M$. The assumption implies that for $\lambda_{\mathcal A}$-almost every $u$ in $\mathcal{A}$ and  $\lambda_{\mathcal M}$-almost every $v$ in $\mathcal{M}$
		$$\int_{\widehat{\mathbb{C}}}\log^-|u\circ v|d\widehat{m_1} \leq \int_{\widehat{\mathbb{C}}}\log^-|u\circ v|d\widehat{m_2}.$$
	Moreover, for $\lambda_{\mathcal M}$-almost every $v$ in $\mathcal{M}$, $v^{-1}(\infty)$ is not an atom of $\widehat{m_1}$ and $\widehat{m_2}$. Then the image measures  $m_1=v_*\widehat{m_1}$ and $m_2=v_*\widehat{m_2}$ have no atoms at $\infty$, and then satisfy 
	$$\int_{\mathbb{C}}\log^-|u|dm_1 \leq \int_{\mathbb{C}}\log^-|u|dm_2$$
	for $\lambda_{\mathcal A}$-almost every $u$ in $\mathcal{A}$. We deduce from Lemma \ref{lem.affine} that $m_1\leq m_2$ on $\mathbb C$, and so on $\widehat{\mathbb{C}}$.
Thus, for $\lambda_{\mathcal M}$-almost every $v$ in $\mathcal{M}$, $v_*\widehat{m_1}\leq v_*\widehat{m_2}$. In particular, there exist a sequence of elements $v_k$ of $\mathcal{M}$ such that $(v_k)*\widehat{m_1}\leq (v_k)_*\widehat{m_2}$ and converging uniformly to the identity mapping on $\widehat{\mathbb C}$ when $k$ goes to infinity. Thus $\widehat{m_1}\leq \widehat{m_2}$.
	
\end{proof}

We can now complete the proof of Theorem \ref{thm.main} in the case where the base measure has no finite support. Combining the conclusions of Lemma \ref{lem.clusterprop}  and Lemma \ref{lem.homo}, there exists a set of $\mathbb P$-full measure such that, for any cluster value of the sequence $(\nu_n)=(\nu_n^{\omega})$, seen as measures on $\widehat{\mathbb C}$, we get that $\widehat{\nu}_\infty= \mu$. As a result, $\mu$ is the only cluster value of the sequence $(\nu_n)$, and the latter converges to $\mu$, which is in fact a probability measure on $\mathbb C$, hence the result.

\par
\medskip
\noindent
Let us conclude the article by a few remarks and possible natural extensions of the results presented here. 
\begin{enumerate}
	\item In the recent reference \cite{MR4474893}, Byun, Lee and Reddy proved that Theorem \ref{thm.kab} remains true if we replace the derivative of $P_n$ by higher derivatives. It seems reasonable to think that the almost sure convergence in Theorem \ref{thm.main} extends to a finite number of derivatives. 
	\item Having established the almost sure convergence of both sequences $(\mu_n)$ and $(\nu_n)$ to $\mu$, it is natural to try to quantify the rate of convergence to $0$ of the distance $d(\mu_n, \nu_n)$ for some natural metric $d$ (e.g. Prorohov, Kolmogorov, Wasserstein...). This also open the doors to the study of the fluctuations of the difference $\mu_n-\nu_n$.
	\item Can we obtain analog results for more general sequences of random polynomials $P_n$ than this model where the roots are chosen i.i.d. ? This question is broad and has already been studied in specific contexts in some cited papers here, e.g. \cite{MR3698743,MR3940764}.
\end{enumerate}

 \if{

\begin{rmk}
	Here, one can moreover make explicit the fluctuations around the limit. Indeed, we have 
	\[
	\nu_n-\mu = \sum_{i=1}^r \left( \frac{N_i-1}{n-1} -p_i\right) \delta_{z_i} + \frac{1}{n-1} \sum_{i=1}^{r-1} \delta_{z_i'}.
	\]
	By the Central Limit Theorem, we have 
	\[
	\sqrt{n} \left(\left( \frac{N_1}{n}, \ldots, \frac{N_r}{n} \right) - (p_1, \ldots, p_r)\right)\xrightarrow[n \to +\infty]{d} \mathcal N_r(0, K),
	\]
	where the covariance matrix $K=(K_{ij})$ is given by $K_{i,j}:= \delta_{ij} p_i - p_i p_j$. Now, if $\varphi$ is a bounded measurable test function on $\mathbb C$, and if we set $\bar \varphi:=(\varphi(z_1), \ldots, \varphi(z_r))$ we can decompose
	\[
	\begin{array}{ll}
	\displaystyle{\sqrt{n} \left( \nu_n(\varphi)-\mu(\varphi)\right) }& = \displaystyle{\sum_{i=1}^r \sqrt{n} \left( \frac{N_i-1}{n-1} -p_i\right) \varphi(z_i) + \frac{\sqrt{n}}{n-1} \sum_{i=1}^{r-1} \varphi(z_i')}\\
	\\
	& =\displaystyle{\frac{n}{n-1}\sum_{i=1}^r \sqrt{n} \left( \frac{N_i}{n} -p_i\right) \varphi(z_i) + O\left(\frac{1}{\sqrt{n}}  \right).}
	\end{array}
	\]
	Therefore, we get 
	\[
	\sqrt{n} \left( \nu_n(\varphi)-\mu(\varphi)\right)  \xrightarrow[n \to +\infty]{d}\mathcal N_1 \left(0, \bar \varphi K \bar \varphi^t\right),
	\]
	where the limit variance can be rewritten as 
	\[
	\bar \varphi K \bar \varphi^t = \sum_{i,j=1}^r \varphi(z_i)\varphi(z_j)(\delta_{ij} p_i - p_i p_j) = \sum_{i=1}^r \varphi(z_i)^2 p_i - \left(  \sum_{i=1}^r\varphi(z_i) p_i\right)^2
	\]
	or in other words
	\[
	\bar \varphi K \bar \varphi^t=\int \phi(z)^2 \mu(dz) - \left(\int \phi(z) \mu(dz) \right)^2= \text{var}_{\mu}(\phi).
	\]
\end{rmk}
}\fi

{\small

}

\if{
{\small 
\bibliographystyle{alpha}
\bibliography{biblio}
}
}\fi
\end{document}